\definecolor{LightCyan}{rgb}{0.88,1,1}
\newcommand*{\belowrulesepcolor}[1]{%
	\noalign{%
		\kern-\belowrulesep
		\begingroup
		\color{#1}%
		\hrule height\belowrulesep
		\endgroup
	}%
}
\newcommand*{\aboverulesepcolor}[1]{%
	\noalign{%
		\begingroup
		\color{#1}%
		\hrule height\aboverulesep
		\endgroup
		\kern-\aboverulesep
	}%
}
\newcommand{\CubicBound}{C}
\DeclarePairedDelimiter{\floor}{\lfloor}{\rfloor}
\newcommand{\BlackBox}{\rule{1.5ex}{1.5ex}}  
\newtheorem{thm}{Theorem}
\newtheorem{lemma}[thm]{Lemma} 
\newtheorem{proposition}[thm]{Proposition}
\newtheorem{assum}{Assumption}
\newcommand{\Ebb}{\mathbb{E}}
\newcommand{\xb}{\mathbf{x}}
\newcommand{\A}{\mathbf{A}}
\newcommand{\Hb}{\mathbf{H}}
\newcommand{\I}{\mathbf{I}}
\newcommand{\U}{\mathbf{U}}
\newcommand{\X}{\mathbf{X}}
\newcommand{\Y}{\mathbf{Y}}
\newcommand{\Z}{\mathbf{Z}}
\newcommand{\g}{\mathbf{g}}
\newcommand{\s}{\mathbf{s}}
\newcommand{\x}{\mathbf{x}}
\newcommand{\ub}{\mathbf{u}}
\newcommand{\bLambda }{\mathbf{\Lambda}}
\newcommand{\RR}{\mathbb{R}}
\newcommand{\Oc}{\mathcal{O}}
\newcommand{\zero}{\mathbf{0}}
\newcommand{\argmin}{\mathop{\mathrm{argmin}}}
\newcommand{\norm}[1]{\| #1 \|}
\newcommand{\norml}[1]{\left\| #1 \right\|}
\newcommand{\normlarge}[1]{\left\Vert #1\right\Vert}
\newcommand{\tre}{\mathrm{ tr} \ \mathrm{ exp}}
\newcommand{\numleqslant}[1]{\overset{\text{(#1)}}{\leqslant}}
\newcommand{\numequ}[1]{\overset{\text{(#1)}}{=}}
\newcommand\numberthis{\addtocounter{equation}{1}\tag{\theequation}}
\begin{document}
\twocolumn[

\aistatstitle{Stochastic Variance-Reduced Cubic Regularization for Nonconvex Optimization} 

\aistatsauthor{ Zhe Wang  \And Yi Zhou  \And  Yingbin Liang \And Guanghui Lan }

\aistatsaddress{ Ohio State University \\ wang.10982@osu.edu \And Ohio State University \\zhou.1172@osu.edu \And Ohio State University \\ liang.889@osu.edu \And Georgia Institute of Technology \\george.lan@isye.gatech.edu}
]

\begin{abstract} 
Cubic regularization (CR) is an optimization method with emerging popularity due to its capability to escape saddle points and converge to second-order stationary solutions for nonconvex optimization. However, CR encounters a high sample complexity issue for finite-sum problems with a large data size. 
In this paper, we propose a stochastic variance-reduced cubic-regularization (SVRC) method under random sampling, and study its convergence guarantee as well as sample complexity. We show that the iteration complexity of SVRC for achieving a second-order stationary solution within $\epsilon$ accuracy is $O(\epsilon^{-3/2})$, which matches the state-of-art result on CR types of methods. Moreover, our proposed variance reduction scheme significantly reduces the per-iteration sample complexity. The resulting total Hessian sample complexity of our SVRC is ${\Oc}(N^{2/3} \epsilon^{-3/2})$, which outperforms the state-of-art result by a factor of $O(N^{2/15})$. We also study our SVRC under random sampling without replacement scheme, which yields a lower per-iteration sample complexity, and hence justifies its practical applicability. 
\end{abstract}
 
\vspace{-4mm}
\section{Introduction}
\vspace{-.1cm}
Many machine learning problems are formulated as finite-sum nonconvex optimization problems that take the form 
\begin{align}
\min_{\xb \in \RR^d} F(\xb) \triangleq \frac{1}{N} \sum_{i=1}^{N} f_i(\xb),  \label{eq: obj}
\end{align} 
where each component function $f_i$ corresponds to the loss on the $i$-th data sample.
While finding global optimal solutions of generic nonconvex optimization problems are challenging, various nonconvex problems in the form of \cref{eq: obj} have been shown to possess good landscape properties that facilitate convergence. For example, the square loss of a shallow linear neural network is shown to have only strict saddle points other than local minimum \citep{Baldi_1989_1}. The same property also holds for some other nonconvex problems such as phase retrieval \citep{Sun_2017} and matrix factorization \citep{RongGe_2016,Bhojanapalli_2016}. Such a remarkable property has motivated a growing research interest in designing algorithms that can escape strict saddle points and have guaranteed convergence to local minimum, and even to global minimum for problems without spurious local minimum. 

Various algorithms have been designed to have the capability to escape strict saddle points in nonconvex optimization. Such a desired property requires that the obtained solution $\xb^\star$ satisfies the second-order stationary conditions within an $\epsilon$ accuracy, i.e.,
 \begin{align} \label{eq: 2nd_opt}
 	 \norm{\nabla F(\xb^\star)} \leqslant  \epsilon, \quad \quad \nabla^2F(\xb^\star) \succcurlyeq -\sqrt{\epsilon} \bm{I}.
 \end{align}   
Therefore, upon convergence, the gradient is guaranteed to be close to zero and the Hessian is guaranteed to be almost positive semidefinite, which thresh-out the possibility to converge to strict saddle points. Among these algorithms (which are reviewed in related work), the cubic-regularized Newton's method (also called cubic regularization or CR) \citep{Nesterov2006} is a popular method that provides the second-order stationary guarantee for the obtained solution. At each iteration $k$, CR solves a sub-problem that approximates the objective function in \cref{eq: obj} with a cubic-regularized second-order Taylor's expansion at the current iterate $\xb_k$. In specific, 
 the update rule of CR can be written as
 \begin{align}
 	&\s_{k+1} = \argmin_{\s \in \RR^d} \nabla F(\x_k)^\top\s + \frac{1}{2} \s^\top \nabla^2 F(\x_k)\s +\frac{M}{6}\norml{\s}^3 , \nonumber\\
 	&\x_{k+1} = \x_k + \s_{k+1}. \label{cubic_regularization}
 \end{align}
It has been shown that CR converges to a point satisfying the second-order stationary condition  (\cref{eq: 2nd_opt}) within $\Oc( \epsilon^{-3/2})$ number of iterations. However, fully solving the exact cubic sub-problem in \cref{cubic_regularization} requires a high computation complexity, especially due to the computation of the Hessian matrices for loss functions on all the data samples.  To evaluate the complexity of CR type algorithms, we define the stochastic Hessian oracle (SHO) as follows. Given a point $\x$ and the component number $i$, the oracle returns the corresponding Hessian $\nabla^2 f_i(\x)$. Moreover, we define the subproblem oracle (SO) as a subroutine, which for a given a point $\x$,  returns the minimizer of \cref{cubic_regularization}. In \cite{Cartis2011a}, the authors proposed an inexact cubic-regularized (inexact-CR) Newton's method, which formulates the cubic sub-problem in \cref{cubic_regularization} with an inexact Hessian $\Hb_k$ that satisfies
\begin{align}
	\norml{(  \Hb_k - \nabla^2 F(\x_k))   \s_{k+1}} \leqslant C \norml{  \s_{k+1}}^2, \label{eq: inexact_H}
\end{align}
where $C \geqslant 0$ is a certain numerical constant. In particular, \cite{Cartis2011a} showed that such an inexact method achieves the same order of theoretical guarantee as the original CR. This inexact condition has been explored  in various   situations \citep{kohler2017,Cartis2012b,Cartis2012,Yi2018}. Especially, in order to satisfy the inexact Hessian condition in \cref{eq: inexact_H}, \cite{kohler2017} proposed a practical sub-sampling scheme (referred to SCR) to implement  the inexact-CR. Specifically, at each iteration $k$,  SCR collects two index sets $\xi_g(k), \xi_H(k)$ whose elements are sampled uniformly from $\{1, \ldots, N\}$ at random, and then evaluates respectively the gradients and Hessians of the corresponding component functions, i.e., $\g_k \triangleq 	\frac{1}{|\xi_g(k)|}\sum_{i \in \xi_g(k)} \nabla f_i(\x_k)$ and $\Hb_k \triangleq 	\frac{1}{|\xi_H(k)|}\sum_{i \in \xi_H(k)} \nabla^2 f_i(\x_k)$. Then,  SCR solves the following cubic sub-problem at the $k$-th iteration.
 \begin{align*}
 	\s_{k+1} &= \argmin_{\s \in \mathbb{R}^d} \g_k^\top \s + \tfrac{1}{2} \s^\top  \Hb_k \s +\tfrac{M}{6}\norml{\s}^3.
 \end{align*}
\cite{kohler2017} showed that if the mini-batch sizes to satisfy
\begin{align} \label{subsampling_sample_size}
	|\xi_{g}(k)| \geqslant \mathcal{O}\left(\frac{1}{ \norml{\s_{k+1}}^{4}}\right),    
	|\xi_{H}(k)| \geqslant \mathcal{O}\left(\frac{1}{ \norml{\s_{k+1}}^{2}}\right),
\end{align}
then the sub-sampled mini-batch of Hessians $\Hb_k$ satisfies \cref{eq: inexact_H} and the sub-sampled mini-batch of gradients $\g_k$ satisfies 
\begin{align}
	\norml{   \g_k - \nabla F(\x_k) } \leqslant C_1 \norml{  \s_{k+1}}^2, \label{eq: inexact_g} 
\end{align}
where $C_1 \geqslant 0$ is a certain numerical constant, which further guarantee the same convergence rate for SCR as that the original exact CR.
%

Three important issues here motivate our design of a new sub-sampling CR algorithm. 
\begin{list}{$\bullet$}{\topsep=0.ex \leftmargin=0.13in \rightmargin=0.in \itemsep =0.in}

\item It can be seen from \cref{subsampling_sample_size} that as the algorithm converges, i.e., $\s_{k+1} \to \zero$, the required sample size of SCR in  \cite{kohler2017} grows polynomially fast, resulting significant increase in computational complexity. Thus, an important open issue here is to design an improved sub-sampling CR algorithm that reduces the sample complexity (and correspondingly computational complexity) particularly when the algorithm approaches to convergence.

\item Another reason for the above pessimistic bound is because that \cite{kohler2017} analyzed the sample complexity for sampling {\em with} replacement, whereas in practice sampling {\em without} replacement can potentially have much lower sample complexity. As a clear evidence, the sample complexity for sampling {\em with} replacement to achieve a certain accuracy can be {\em unbounded}, whereas this for sampling {\em without} replacement can only be as large as the total sample size. Thus, the second open issue is to develop bounds for sampling {\em without} replacement in order to provide more precise guidance for sub-sampled CR methods.

\item We also observe that \cref{eq: inexact_H,eq: inexact_g} involve $\norml{\s_{k+1}}$ (and hence $\x_{k+1}$), which is not available at iteration $k$. \cite{kohler2017} used $s_k$ to replace $s_{k+1}$ in experiments but not theory. A more recent study \cite{Wang2018inexact} theoretically justified such a replacement with the convergence analysis, but not for stochastic sub-sampling scheme, for which the convergence analysis requires considerable efforts.

\end{list}


 \renewcommand{\arraystretch}{1.3} 
\definecolor{LightCyan}{rgb}{0.88,1,1}
\begin{table*}[t] 
	\centering 
	\begin{tabular}{cllll} \toprule
		\multirow{2}{*}{Algorithms}&   &\multicolumn{1}{c}{Total}   &\phantom{a}    &\multicolumn{1}{c}{Total} \\  
		&   &\multicolumn{1}{c}{SHO}   & &\multicolumn{1}{c}{SO}  \\   \midrule 
		CR & \citep{Nesterov2006}   &  $\Oc(N\epsilon^{-3/2})$   &  &$\Oc(\epsilon^{-3/2}  )$    \\  \midrule 
		
		SCR &\citep{kohler2017}   &  $\Oc(\epsilon^{-5/2})$   &  &$\Oc(\epsilon^{-3/2}  )$    \\  \midrule 
		
		Inexact CR & \citep{Xu2017}  &  $\Oc(\epsilon^{-5/2})$   &  &$\Oc(\epsilon^{-3/2}  )$    \\  \midrule 
		
		SVRC(ZXG) & \citep{gu2018}  & $\Oc( N^{4/5} \epsilon^{-3/2}  )$  & &   $\Oc(  \epsilon^{-3/2}  )$  \\  \midrule \belowrulesepcolor{LightCyan}
		\rowcolor{LightCyan}
		SVRC   &(This Work)       & $\tilde{\Oc}( N^{2/3}\epsilon^{-3/2})$&   &   $\Oc(  \epsilon^{-3/2}  )$    \\  
		\aboverulesepcolor{LightCyan}  
		\bottomrule
	\end{tabular} 
	\vspace{2mm}
	\caption{Comparison of total Hessian sample complexity } \label{table_1}
\end{table*}
\footnotetext[1]{We note that SVRC(ZSG) does not need the objective function and its gradient to be Lipschitz but we adopt such assumptions.} 

In this paper, we address the aforementioned open issues, and our contributions are summarized as follows. 
 \vspace{-.1cm}
\subsection*{Our Contributions}
\vspace{-.1cm}
We propose a stochastic variance reduced cubic-regularized (SVRC) Newton's algorithm, which combines the variance reduced technique with concentration inequality under sub-sampling scheme.  We show that the computation of the full Hessian and gradient  can facilitate many steps of efficient inner-loop iteration as well as accurate approximation of  Hessian and gradient under high probability perspective. SVRC can be associated with two sampling schemes, respectively with and without replacement.

We establish the convergence guarantee of SVRC {\em with high probability} under the implementable inexact condition similar with $\norml{   \Hb_k - \nabla^2 F(\x_k)  } \leqslant C \norml{  \s_{k}}$. We show that the convergence of SVRC is at the same rate ($O(\epsilon^{-3/2})$) as the original CR \citep{Nesterov2006} or the other type of inexact-CR in \cite{Cartis2011a,Cartis2011b,kohler2017}. 

We then develop the bounds on the {\em total} Hessian sample complexity of SVRC. We show that SVRC achieves  $ \tilde{\Oc}(N^{2/3}\epsilon^{-3/2})$ Hessian sample complexity (where we use $\tilde{\Oc}$ to hide the dependence on log factors), which outperforms CR \citep{Nesterov2006} by an order of $O(N^{1/3})$ and outperform SCR \citep{kohler2017} in the regime of high accuracy requirement.  Furthermore, our proposed SVRC order-wise outperforms the algorithm SVRC(ZSG) \citep{gu2018} by an order of $O(N^{2/15})$, which is also a variance reduced cubic regularized method {\em concurrently proposed}. A detailed comparison among these algorithms are summarized in \Cref{table_1}.




We further provide an analysis for the case under sampling without replacement by developing a new concentration bound for sampling without replacement for random {\em matrices} by generalizing that for {\em scalar} random variables in \cite{bardenet2015}. Our result shows that sample replacement has lower  sample complexity than that of with replacement in each iteration.

\subsection*{Related Works}
 \textbf{Escaping saddle points:} Various algorithms have been developed to escape  strict saddle points and  converge to local minimum for nonconvex optimization. The first-order such algorithms include the gradient descent algorithm with random initialization \citep{lee2016} and with injection of random noise \citep{Ge2015,jinchi2017}. Various second-order algorithms were also proposed. In particular, \cite{YangT2017a,Yang2017b,JohnDuchi2016} proposed algorithms that exploit the negative curvature of Hessian to escape saddle points. The CR method as we describe below is another type of second-order algorithm that has been shown to escape strict saddle points.

\textbf{CR type of algorithms:} The CR method was shown in \cite{Nesterov2006} that converges  to a point that satisfies the first- and second-order optimality condition for nonconvex optimization. Its accelerated version was proposed in \cite{Nesterov2008} and the convergence rate was characterized for convex optimization. Several methods have been proposed to solve the cubic sub-problem in CR more efficiently. \cite{Cartis2011a} proposed to approximately solve the cubic sub-problem in Krylov space. \cite{Agarwal2017} proposed an alternative fast way to solve the sub-problem. \cite{Carmon2016} proposed a method based on gradient descent. 
  
\textbf{Inexact CR algorithms:} Various inexact approaches were proposed to approximate Hessian and gradient in order to reduce the computational complexity for CR type of  algorithms. In particular, \cite{Saeed2017} studied the inexact CR and accelerated CR for convex optimization, where the inexactness is fixed throughout the iterations. \cite{JinChi2017cubic} studied a similar inexact CR for nonconvex optimization. Alternatively, \cite{Cartis2011a,Cartis2011b} studied the inexact CR for nonconvex optimization, where the inexact condition is adaptive during the iterations. \cite{Wang2018inexact} established the convergence result of CR under a more reasonable inexact condition. \cite{Jiang2017} studied the adaptive inexact accelerated CR for convex optimization. In practice, sub-sampling is a very common approach to implement inexact algorithms. \cite{kohler2017} proposed a sub-sampling scheme that adaptively changes the sample complexity to guarantee the inexactness condition in \cite{Cartis2011a,Cartis2011b}. \cite{Xu2017} proposed uniform and nonuniform sub-sampling algorithms with fixed inexactness condition for nonconvex optimization.

\textbf{Stochastic variance reduced algorithms:} Stochastic variance reduced algorithms have been applied to various first-order algorithms (known as SVRG algorithms), and the convergence rate has been studied for convex functions in, e.g., \cite{ZhangTong2013,Lin_2014} and for nonconvex functions in, e.g., \cite{sashank2016}. \cite{gu2018} proposed a variance reduction version of CR. In this paper, we proposed another type of stochastic variance reduction to the second-order CR method to improve the state-of-art sample complexity result of approximating Hessian and gradient in probability perspective, and analyzed it in with and without replacement schemes. 

\textbf{Sampling without replacement:} The sampling without replacement scheme for first-order methods has been studied by various papers. \cite{Recht2012} and \cite{Shamir2016} studied  stochastic gradient descent under sampling without replacement for least square problems. \cite{Gzbalaban2015} provided convergence rate of the random reshuffling method. As for the sampling without replacement bounds, \cite{Hoeffding1963} showed that the bound for sampling with replacement also holds for sampling without replacement. \cite{Friedlander2012} provided deterministic bounds for without replacement sampling schemes for gradient approximations under certain assumptions. \cite{bardenet2015} provided tight concentration bounds for sampling without replacement for scalar random variables, while  bounds for random matrices remain unclear. We fill this gap, and provide a tight bound for random matrices under sampling without replacement in this paper.


\section{Stochastic Variance Reduction Scheme for Cubic Regularization}
In this paper, we are interested in solving the  finite-sum problem given in \cref{eq: obj}, which is rewritten below.
\begin{align}
 \min_{\xb \in \RR^d} F(\xb) \triangleq \frac{1}{N} \sum_{i=1}^{N} f_i(\xb),   \label{eq: obj2}
\end{align}
 where the component functions $f_i, i=1, \ldots, N$ correspond to the loss of the $i$-th data samples, respectively, and is  nonconvex. More specifically, we adopt the following standard assumptions on the objective function in \cref{eq: obj2} throughout the paper 
 \begin{assum}\label{assum: obj}
 	The objective function in \cref{eq: obj2} satisfies
 	\begin{enumerate}[leftmargin=*,topsep=0pt,noitemsep]
 		\item  Function $F$ is bounded below, i.e., $\inf_{\xb \in \RR^d} F(\xb) > -\infty$;
 		\item  	For all component functions $f_i, i=1, \ldots, N$, the  function value $f_i$, the gradient $\nabla f_i$, and the Hessian $\nabla^2 f_i$ are  $L_0,L_1$ and $L_2$-Lipschitz,  respectively.
 	\end{enumerate}
 \end{assum}  
 
Classical first-order stochastic optimization methods such as stochastic gradient descent has a low sample complexity per-iteration \citep{Nemirovski_2009}. However, due to the variance of the stochastic gradients, the convergence rate is slow even with the incorporation of momentum \citep{Lan10-3,GhaLan15-1}. A popular approach to maintain the sample complexity yet achieve a faster convergence rate that is comparable to that of the full batch first-order methods is the stochastic variance reduction scheme \citep{ZhangTong2013,Lin_2014}. 

Motivated by the success of the variance reduction scheme in improving the sample complexity of first-order methods, we propose a {\em stochastic variance reduced cubic}-regularized Newton's method, and refer to it as SVRC. The detailed steps of SVRC are presented in \Cref{SVRC}. To briefly elaborate the notation in \Cref{SVRC}, we sequentially index the iterate variable $\x$ across all inner loops by $k$ for $k=0, 1, \ldots$, so that for each $\x_k$, the initial variable of its inner loop is indexed as $ \x_{\floor{k/m} \cdot m}$ (where $m$ is the number of iterations in each inner loop). For notational simplicity, we denote such an initial variable of each inner loop as $\tilde{ \x} $ and denote its corresponding full gradient and Hessian as $\tilde{\g}$ and $\tilde{\Hb}$, whenever there is no confusion.

\begin{algorithm}
  		\caption{SVRC} \label{SVRC}  
  		\begin{algorithmic}
  			\STATE {\bfseries Input:} $\x_0 \in \mathbb{R}^d$,  and  $\epsilon_1, m, M \in \mathbb{R^{+}}$.
  			\WHILE { $k$} 
  			\IF   {$k~\textrm{mod}~m = 0$  }
  			\STATE Set $\centering  \g_k= \nabla  F( \x_k) $,  $ \Hb_k = \nabla^2 F( \x_k)$, $  \widetilde{\g} = \g_k, \tilde{\x} = \x_k$ and $ \widetilde{\Hb} = \Hb_k $.
  			\ELSE
  			\STATE Sample index sets $\xi_{g}(k)$  and $ \xi_{H}(k)$ from $\{1,...,n\}$ uniformly at random.
  			\STATE \textbf{Compute} 
  			\begin{align*}
  				\g_k \!&=\! \tfrac{1}{|\xi_{g}(k)|}  \big[ \!\textstyle\sum_{i \in \xi_{g}(k)} \big(\nabla f_i(\x_k)  \!-\! \nabla f_i(\tilde{\x})\big) \big]  \!+\! \tilde{\g}, \nonumber\\
  				\Hb_k \!&=\! \tfrac{1}{|\xi_{H}(k)|}  \big[\! \textstyle\sum_{i \in \xi_{H}(k)} (\nabla^2 f_i(\x_k)  \!- \! \nabla^2 f_i(\tilde{\x})) \!\big]  \!+\! \widetilde{\Hb}.
  			\end{align*}
  			\ENDIF
  			\STATE $\s_{k+1} = \argmin_{\s \in \RR^d}   \g_k^\top\s  + \frac{1}{2} \s^\top    \Hb_k\s  +\frac{M}{6}\norml{\s }^3$.
  			\STATE $\x_{k+1} =  \x_k + \s_{k+1}$.
  			\IF {$\max \{\norml{s_{k+1}}, \norml{s_{k}}\} \leqslant \epsilon_1$  }
  			\STATE return $x_{k+1}$
  			\ENDIF
  			\ENDWHILE
  		\end{algorithmic}
 \end{algorithm}
 
To elaborate the algorithm, SVRC calculates a full gradient $\widetilde{\g}$ and a full Hessian $\widetilde{\Hb}$ in every outer loop (i.e., for every $m$ iterations), which are further used to construct the stochastic variance reduced gradients $\g_k$ and Hessians $\Hb_k$ in the inner loops. 
Note that the index sets $\xi_{g}(k), \xi_{H}(k)$ for the sampled gradients and Hessians are generated by a random sampling scheme. More specifically, we consider the following two types of sampling schemes in this paper.

\textbf{Sampling with replacement:} For $k= 0, 1, \ldots$, each element of the index sets $\xi_g(k)$ and $\xi_H(k)$ is sampled uniformly at random from $\{1, \ldots, N\}$.

\textbf{Sampling without replacement:} For $k= 0, 1, \ldots$, the index sets $\xi_g(k)$ and $\xi_{H}(k)$ are sampled uniformly at random from all subsets of $\{1, \ldots, N\}$ with cardinality $|\xi_g(k)|$ and $|\xi_{H}(k)|$, respectively.

To elaborate, the sampling with replacement scheme may sample the same index multiple times within each mini-batch, whereas the sampling without replacement scheme samples each index at most once within each mini-batch. Therefore, the sampling without replacement scheme has a smaller variance compared to that of the sampling with replacement scheme. Consequently, these sampling schemes lead to inexact gradients and inexact Hessians with different guarantees to meet the inexactness criterion.



\section{Sample Complexity of SVRC}

In this section, we study the sample complexity of SVRC for achieving a second-order stationary point via three technical steps, each corresponding to one subsection below.

 
\subsection{Iteration Complexity under Modified Inexact Condition}

In order to analyze the sample complexity of SVRC for achieving a second-order stationary point, it turns out that the inexact condition \citep{Wang2018inexact} on the estimated gradients and Hessians is not sufficient. Thus, we propose a modified inexact condition below, and then analyze the convergence to a second-order stationary point if SVRC satisfies such a condition. 

 \begin{assum} \label{assumption}
	The approximate Hessian $\mathbf{H}_k$ and approximate gradient $\mathbf{g}_k$ satisfy, for all $k = 0, \cdots$ ,
	\begin{align}
	\norml{   \Hb_k - \nabla^2 F(\x_k)  } \leqslant \alpha \max \left\{ \norml{  \s_{k}} ,\epsilon_1  \right\} \label{new_inexact_condition1} \\
	\norml{  \g_k - \nabla  F(\x_k)} \leqslant \beta \max \left\{ \norml{  \s_{k}}^2 ,\epsilon_1^2  \right\}  \label{new_inexact_condition2}
	\end{align} 
	where $\epsilon_1, \alpha$ and $\beta$ are universal positive constants.
\end{assum}
The inexact conditions in \cref{new_inexact_condition1,new_inexact_condition2} introduce a slack variable $\epsilon_1$ to avoid full batch sampling when $\norml{s_k}$ is very close to zero upon convergence. It turns out introduction of such a variable is essential for characterizing the total sample complexity of our proposed variance reduction scheme in \Cref{SVRC}. Furthermore, since \cref{new_inexact_condition1,new_inexact_condition2} are different from that in \citep{Wang2018inexact}, and hence require the convergence analysis if SVRC satisfies such conditions. The following theorem presents the iteration complexity analysis under the modified conditions. The technical proof in fact requires considerable extra effort than that in \citep{Wang2018inexact}. 


 \begin{thm} \label{convergence_thm}
 \hspace{-2mm}	Suppose Assumption \ref{assum: obj} holds, and SVRC satisfies \ref{assumption}. Let
	\begin{align*} 
	 \tau \triangleq \min \Big\{ & \left(\frac{L+M}{2} + 2\beta + 2\alpha \right)^{-\frac{ 1}{2}},  \\
	 & \left( \frac{M+2L}{2}+ 2\alpha \right)^{-1}  \Big\},
 	\end{align*} 
 	set
	\begin{align}
 		\epsilon_1 = \tau \sqrt{\epsilon}, \label{cov_10}
 	\end{align}
 	and properly choose $M, \alpha$ and $\beta \in \mathbb{R}$ such that
 	\begin{align}
 	\gamma \triangleq  \left(\frac{3M-2L_2}{24} - \frac{5}{2}\beta -\frac{5}{4}\alpha \right) > 0.
 	\end{align}
 	Then, the SVRC algorithm outputs an $\epsilon$-approximate second-order stationary point, i.e.,
 	\begin{align}
 	\norml{\nabla f(\x_{k+1})} \leqslant \epsilon \quad \text{ and }  \quad  \nabla^2 f(\x_{k+1}) \succcurlyeq - \epsilon \mathbf{I}
 	\end{align}
 	within at most   $  k  = O\left(  \epsilon^{-3/2}\right)$ number of iterations. 
 	Moreover, the following inequality holds
 	\begin{align}
 	\sum_{i=1}^{k+1}    \norml{\s_{i} }^3   &\leqslant  C, \label{vector_norm_bound_7}
 	\end{align} 
 	where $C \triangleq ( {f(\x_0)  - f^*    +\left(2\beta +  {\alpha} + 2\gamma \right) \epsilon_1^3})/{\gamma}$.
 \end{thm}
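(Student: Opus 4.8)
The plan is to establish a sufficient decrease property per iteration and then sum the resulting telescoping inequality, following the classical cubic-regularization analysis (as in \citep{Nesterov2006,Cartis2011a,Wang2018inexact}) but carefully tracking the slack variable $\epsilon_1$ introduced by \cref{new_inexact_condition1,new_inexact_condition2}. First I would use the first-order optimality condition of the cubic subproblem, namely $\g_k + \Hb_k \s_{k+1} + \frac{M}{2}\norml{\s_{k+1}}\s_{k+1} = \zero$, together with the fact that $\s_{k+1}$ is the global minimizer, to derive the standard consequence that $\Hb_k + \frac{M}{2}\norml{\s_{k+1}}\I \succcurlyeq \zero$. From the $L_2$-Lipschitz continuity of the Hessian (Assumption~\ref{assum: obj}) I would write the cubic upper model $F(\x_{k+1}) \leqslant F(\x_k) + \nabla F(\x_k)^\top \s_{k+1} + \frac{1}{2}\s_{k+1}^\top \nabla^2 F(\x_k)\s_{k+1} + \frac{L_2}{6}\norml{\s_{k+1}}^3$, and then substitute the estimates $\g_k,\Hb_k$ for the true gradient and Hessian, paying for the substitution through the inexactness bounds in \cref{new_inexact_condition1,new_inexact_condition2}.

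The key step is to combine the optimality-condition identity with this descent inequality to obtain a per-iteration bound of the shape $F(\x_{k+1}) \leqslant F(\x_k) - \gamma \norml{\s_{k+1}}^3 + (\text{slack terms in }\epsilon_1)$, where $\gamma = \left(\frac{3M-2L_2}{24} - \frac{5}{2}\beta - \frac{5}{4}\alpha\right) > 0$ is exactly the constant assumed positive in the theorem. The factors $\frac{5}{2}\beta$ and $\frac{5}{4}\alpha$ should emerge from bounding the cross terms $(\g_k - \nabla F(\x_k))^\top \s_{k+1}$ and $\frac{1}{2}\s_{k+1}^\top(\Hb_k - \nabla^2 F(\x_k))\s_{k+1}$ using \cref{new_inexact_condition1,new_inexact_condition2} and Young's inequality to convert the $\max\{\norml{\s_k},\epsilon_1\}$-type bounds into powers of $\norml{\s_{k+1}}$ (plus $\epsilon_1$ slack). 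Because the inexact conditions involve $\norml{\s_k}$ rather than $\norml{\s_{k+1}}$, I expect to need to carry a coupling between consecutive steps, which is precisely why the bound records both $\norml{\s_k}$ and $\norml{\s_{k+1}}$ and why the final summed constant $C$ contains the $\epsilon_1^3$ correction.

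Summing the per-iteration inequality from $i=1$ to $k+1$ telescopes the $F(\x_i)$ terms and, using $F$ bounded below by $f^*$ (Assumption~\ref{assum: obj}), yields $\gamma \sum_{i=1}^{k+1}\norml{\s_i}^3 \leqslant F(\x_0) - f^* + (\text{accumulated }\epsilon_1^3 \text{ slack})$, which rearranges to \cref{vector_norm_bound_7} with $C = (f(\x_0) - f^* + (2\beta + \alpha + 2\gamma)\epsilon_1^3)/\gamma$. For the iteration-complexity claim, I would argue that if the stopping criterion has not triggered then $\max\{\norml{\s_{k+1}},\norml{\s_k}\} > \epsilon_1 = \tau\sqrt{\epsilon}$, so each such iteration contributes at least $\gamma\,\epsilon_1^3 = \Theta(\epsilon^{3/2})$ to the left-hand sum; since the total sum is bounded by the constant $C$, the number of iterations before termination is at most $O(\epsilon^{-3/2})$. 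Finally I would verify that upon termination the second-order stationarity condition \cref{eq: 2nd_opt} holds within $\epsilon$, using the optimality identity $\Hb_k + \frac{M}{2}\norml{\s_{k+1}}\I \succcurlyeq \zero$ combined with \cref{new_inexact_condition1} to transfer semidefiniteness from $\Hb_k$ to the true Hessian $\nabla^2 F$, and the gradient identity together with \cref{new_inexact_condition2} to bound $\norml{\nabla F(\x_{k+1})}$; the specific choice of $\tau$ in the theorem is engineered to make these two threshold comparisons come out to exactly $\epsilon$. The main obstacle I anticipate is handling the $\norml{\s_k}$-versus-$\norml{\s_{k+1}}$ mismatch in the inexactness conditions cleanly enough that the cross terms collapse into the stated coefficients $\frac{5}{2}\beta$ and $\frac{5}{4}\alpha$ without loose constants; this is likely the source of the "considerable extra effort" the authors mention relative to \citep{Wang2018inexact}.
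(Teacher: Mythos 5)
Your proposal is correct and follows essentially the same route as the paper's proof: the cubic descent inequality from the $L_2$-Lipschitz Hessian, the subproblem optimality conditions (\cref{opt_1,opt_2,opt_3}), bounding the cross terms via \Cref{assumption} with the elementary inequality $a^2 b \leqslant a^3 + b^3$, pairing consecutive steps to resolve the $\norml{\s_k}$-versus-$\norml{\s_{k+1}}$ mismatch, telescoping against $f^*$, and using the pre-termination condition $\max\{\norml{\s_i},\norml{\s_{i+1}}\} \geqslant \epsilon_1$ plus the choice of $\tau$ for the stationarity guarantees. The only minor imprecision is that $\gamma$ with its coefficients $\tfrac{5}{2}\beta$ and $\tfrac{5}{4}\alpha$ does not appear in the per-iteration bound itself (which carries $2\beta+\alpha$ on $\norml{\s_{k+1}}^3$ and $\beta+\tfrac{\alpha}{2}$ on $\norml{\s_k}^3$) but only after the pairwise regrouping and absorption of the accumulated $\epsilon_1^3$ terms — a bookkeeping detail you correctly anticipated as the delicate step.
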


As stated in \Cref{convergence_thm}, SVRC outputs an $\epsilon$-approximate second-order stationary point with  $ k  = O\left(  \epsilon^{-3/2}\right) $. Such an iteration complexity matches the state-of-art result and is the best result that one can expect on nonconvex optimization.


\subsection{Per-iteration Sample Complexity} \label{with_sec_1}

In this subsection, we bound the per-iteration sample complexity in order for SVRC (under sampling with replacement) to satisfy the inexact conditions in \cref{new_inexact_condition1,new_inexact_condition2}. We apply Bernstein's inequality and obtain the following theorem.
\begin{thm}\label{With_Replacement_Gradient}
Let Assumption  \ref{assum: obj}   hold. Consider SVRC under the sampling with replacement scheme. Then, the sub-sampled mini-batch of gradients $\g_k, k = 0, 1, \ldots$ satisfies \Cref{assumption} with probability at least $1 - \zeta$ provided that 
	\begin{align*}
		|\xi_{g}(k)| &\geqslant   \bigg(    \frac{ 8L_1^2 }{ \beta^2 \max \{\norml{\s_k}^4, \epsilon_1^4\}} \norml{\x_{k} - \tilde{\x}}^2  \\
		&\hspace{-5mm} + \frac{ 4L_1 }{3  \beta \max \{\norml{\s_k}^2, \epsilon_1^2\}} \norml{\x_{k} - \tilde{\x}}    \bigg) \log \left(\frac{ 2(d+1)}{\zeta}\right), \numberthis  \label{eq:svrc_rep_g}
	\end{align*} 
Furthermore, the sub-sampled mini-batch of Hessians $\Hb_k, k = 0, 1, \ldots$ of SVRC satisfies \Cref{assumption} with probability at least $1 - \zeta$ provided that 
	\begin{align*}\label{eq:svrc_rep_h}
	\hspace{-2mm}	|\xi_{H}(k)|  &\geqslant  \bigg(    \frac{ 8L_2^2 }{ \alpha^2 \max \{\norml{\s_k}^2, \epsilon_1^2\}} \norml{\x_{k} - \tilde{\x}}^2  \\
	& + \frac{ 4L_2 }{3  \alpha \max \{\norml{\s_k}, \epsilon_1\}} \norml{\x_{k} - \tilde{\x}}    \bigg) \log \left(\frac{ 4d}{\zeta}\right)   \numberthis.
	\end{align*} 
\end{thm}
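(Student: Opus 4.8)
The plan is to recognize that, conditioned on the history up to iteration $k$, the variance-reduced estimators $\g_k$ and $\Hb_k$ are \emph{unbiased} averages of independent, identically distributed terms, and then to apply a Bernstein-type concentration inequality. Concretely, since the snapshot gradient and Hessian are computed exactly, $\tilde{\g} = \nabla F(\tilde{\x})$ and $\widetilde{\Hb} = \nabla^2 F(\tilde{\x})$. Writing $n = |\xi_{g}(k)|$ and $X_i = \nabla f_i(\x_k) - \nabla f_i(\tilde{\x})$ for $i \in \xi_g(k)$, sampling with replacement makes the $X_i$ i.i.d.\ with conditional mean $\Ebb[X_i] = \nabla F(\x_k) - \nabla F(\tilde{\x})$, so that
\begin{align*}
\g_k - \nabla F(\x_k) = \frac{1}{n}\sum_{i\in\xi_g(k)}\big(X_i - \Ebb[X_i]\big).
\end{align*}
The same manipulation with $X_i = \nabla^2 f_i(\x_k) - \nabla^2 f_i(\tilde{\x})$ expresses $\Hb_k - \nabla^2 F(\x_k)$ as an average of centered i.i.d.\ symmetric matrices. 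A crucial point is that $\x_k$, $\tilde{\x}$, and the previous step $\s_k$ are all measurable with respect to the history $\mathcal{F}_{k-1}$; hence, conditioning on $\mathcal{F}_{k-1}$, the quantities $\norml{\x_k-\tilde{\x}}$ and $\max\{\norml{\s_k},\epsilon_1\}$ are deterministic and the only randomness is the fresh sampling. This is exactly why the inexact condition in \Cref{assumption} is stated in terms of $\s_k$ rather than $\s_{k+1}$: it renders the target accuracy a fixed quantity at sampling time.

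Next I would bound each centered summand in norm using \Cref{assum: obj}. The $L_1$-Lipschitzness of $\nabla f_i$ gives $\norml{X_i}\le L_1\norml{\x_k-\tilde{\x}}$, hence $\norml{X_i-\Ebb[X_i]}\le 2L_1\norml{\x_k-\tilde{\x}} =: R_g$ together with the crude variance bound $\Ebb\norml{X_i-\Ebb[X_i]}^2\le R_g^2$; similarly $L_2$-Lipschitzness of $\nabla^2 f_i$ in operator norm gives $R_H := 2L_2\norml{\x_k-\tilde{\x}}$. For the gradient I would apply the vector Bernstein inequality (obtained from the matrix version via the Hermitian dilation, which produces the dimension factor $2(d+1)$); for the Hessian I would apply matrix Bernstein directly, with dimension factor $2d$. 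Each yields a tail bound of the form
\begin{align*}
\Pbb\!\left(\Big\|\tfrac{1}{n}\textstyle\sum_i (X_i-\Ebb[X_i])\Big\|\ge t\right)\le D\exp\!\left(\frac{-n t^2/2}{R^2 + R t/3}\right),
\end{align*}
with $D = 2(d+1)$ for the gradient and $D = 2d$ for the Hessian.

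I would then invert this tail bound. Setting the target accuracy to $t = \beta\max\{\norml{\s_k}^2,\epsilon_1^2\}$ for the gradient and $t = \alpha\max\{\norml{\s_k},\epsilon_1\}$ for the Hessian, and requiring the right-hand side to be at most $\zeta$, gives the sufficient condition $n\ge \big(\tfrac{2R^2}{t^2} + \tfrac{2R}{3t}\big)\log(D/\zeta)$. Substituting $R_g = 2L_1\norml{\x_k-\tilde{\x}}$ reproduces exactly the coefficients $8L_1^2$ and $\tfrac{4L_1}{3}$ in \cref{eq:svrc_rep_g}, and substituting $R_H = 2L_2\norml{\x_k-\tilde{\x}}$ reproduces $8L_2^2$ and $\tfrac{4L_2}{3}$ in \cref{eq:svrc_rep_h}, together with the stated logarithmic factors. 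Each bound holds with conditional, and therefore unconditional, probability at least $1-\zeta$.

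The main obstacle is not the algebra but selecting and correctly invoking the matrix concentration inequality: the variance proxy in matrix Bernstein is the operator norm of $\sum_i\Ebb[(X_i-\Ebb[X_i])^2]$ rather than $\sum_i\Ebb\norml{X_i-\Ebb[X_i]}^2$, so I must verify $\norml{\Ebb[(X_i-\Ebb[X_i])^2]}\le R_H^2$ to justify the crude variance substitution, and I must track the dimension factor through the dilation argument for the vector case to land on $\log(2(d+1)/\zeta)$ versus $\log(4d/\zeta)$. The conditioning argument that freezes $\norml{\x_k-\tilde{\x}}$ and $\norml{\s_k}$ also warrants care, since the resulting sample size is itself a random, history-dependent quantity.
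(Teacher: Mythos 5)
Your proposal is correct and follows essentially the same route as the paper's own proof: the same decomposition of $\g_k - \nabla F(\x_k)$ and $\Hb_k - \nabla^2 F(\x_k)$ into averages of centered i.i.d.\ variance-reduced differences, the same Lipschitz bounds $R = 2L_1\norml{\x_k - \tilde{\x}}$ (resp.\ $2L_2\norml{\x_k - \tilde{\x}}$) with the crude variance substitution justified via Jensen's inequality, and the same inversion of the matrix Bernstein tail, reproducing the coefficients exactly. The only cosmetic difference is the dimension factor you quote for the Hessian ($2d$ rather than the paper's $2(d_1+d_2)=4d$ from the rectangular form of Bernstein's inequality), which only makes your bound slightly tighter and does not affect the stated conclusion.
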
 
We next compare the per-iteration Hessian sample complexity of SVRC under the sampling with replacement scheme (\cref{eq:svrc_rep_h}) with that of SCR under the same sampling scheme developed in \cite{kohler2017}, which is rewritten below
\begin{align} \label{subsampling_sample_size_1}  
	|\xi_{H}(k)| \geqslant \Oc  \left(\tfrac{1}{\norml{\s_{k+1}}^2}\right).
\end{align}
To compare, our \Cref{With_Replacement_Gradient} requires a Hessian sample complexity of roughly the order 
\begin{align} \label{with_per_bound} 
	|\xi_{H}(k)| \geqslant \Oc  \left(\tfrac{\norml{\x_{k} - \tilde{\x}}^2}{\norml{\s_{k }}^2}\right).
\end{align}

 It can be seen that the sample complexity bounds for SVRC in \cref{with_per_bound}  have an additional term $\norml{\x_k - \tilde{\x} }^2$ in the numerators comparing to their corresponding bound for SCR in \cref{subsampling_sample_size_1}. Intuitively, $\norml{\x_{k} - \tilde{\x}}  \rightarrow 0$ as the algorithm converges, and thus our variance reduction scheme requires a lower sample complexity than the stochastic sampling in SCR.
   
\subsection{Total Sample Complexity of SVRC} \label{with_sec_2} 
\Cref{With_Replacement_Gradient} provides the sample complexity per iteration (each iteration in SVRC inner loop). We next provide our   result on the sample complexity over the running process of SVRC, which is a key factor that impacts the computational complexity of SVRC.  

\begin{thm} \label{Total_Hessian_complexity_bound}
	Let Assumptions \ref{assum: obj}  hold. For a given $\epsilon$ and $\delta$,  then SVRC under the sampling with replacement scheme outputs an point $\x_{k+1}$ such that satisfies $\norm{\nabla F(\xb_{k+1})} \leqslant  \epsilon$ and $\nabla^2F(\xb_{k+1}) \succcurlyeq -\epsilon \bm{I}$ with probability at least $1 - \delta$, and  the total Hessian sample complexity of SVRC is bounded by
    \begin{align*}
    	  \sum_{i=1}^{K} | \xi_{H}(i)|  &\leqslant    \frac{ C N^{2/3} }{\epsilon^{ 3/2}}\log \left(\frac{ 8d}{\epsilon  \delta  }\right). 
    \end{align*}
\end{thm}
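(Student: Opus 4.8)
The plan is to combine the iteration-complexity-plus-budget guarantee of \Cref{convergence_thm} with the per-iteration sample bound of \Cref{With_Replacement_Gradient}, treating the inner-loop length $m$ as a free parameter to be optimized at the end. First I would record from \Cref{convergence_thm} that, whenever \Cref{assumption} holds at every iteration, SVRC terminates after $K = \Oc(\epsilon^{-3/2})$ iterations and along the way satisfies the cubic budget $\sum_{i=1}^{K}\norml{\s_i}^3 \leqslant C$ with $C=\Oc(1)$ (since $\epsilon_1=\tau\sqrt\epsilon$, the $\epsilon_1^3$ term in $C$ is lower order). To make \Cref{assumption} hold simultaneously at all $K$ iterations with probability at least $1-\delta$, I would invoke \Cref{With_Replacement_Gradient} at each iteration with per-iteration confidence $\zeta = \delta/K$ and take a union bound; the resulting logarithmic factor is $\log(4d/\zeta)=\log(4dK/\delta)=\Oc\bigl(\log(d/(\epsilon\delta))\bigr)$, matching the stated $\log(8d/(\epsilon\delta))$ up to constants.

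Next I would split the total Hessian cost $\sum_{i=1}^{K}\lvert\xi_H(i)\rvert$ into the full-batch contribution from the inner-loop starts and the mini-batch contribution from the remaining iterations. A full Hessian is recomputed once per inner loop, contributing $N\lceil K/m\rceil=\Oc(N\epsilon^{-3/2}/m)$ stochastic Hessian evaluations. For the mini-batch part I would bound each term by the right-hand side of the Hessian estimate in \Cref{With_Replacement_Gradient}, and lower-bound the denominators using $\max\{\norml{\s_i}^2,\epsilon_1^2\}\geqslant\epsilon_1^2=\tau^2\epsilon$ (and likewise $\max\{\norml{\s_i},\epsilon_1\}\geqslant\epsilon_1$).

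The crux is controlling $\norml{\x_i-\tilde\x}$. Since $\x_i-\tilde\x=\sum_j\s_j$ over the at most $m$ steps of the current inner loop, the triangle inequality gives $\norml{\x_i-\tilde\x}\leqslant\sum_j\norml{\s_j}$, and Cauchy--Schwarz gives $\norml{\x_i-\tilde\x}^2\leqslant m\sum_j\norml{\s_j}^2$. Summing over the $\leqslant m$ indices of each inner loop and then over all loops yields $\sum_i\norml{\x_i-\tilde\x}^2\leqslant m^2\sum_{j=1}^{K}\norml{\s_j}^2$, and analogously $\sum_i\norml{\x_i-\tilde\x}\leqslant m\sum_{j=1}^{K}\norml{\s_j}$ for the lower-order term. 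Hölder's inequality then converts the budget into $\sum_j\norml{\s_j}^2\leqslant\bigl(\sum_j\norml{\s_j}^3\bigr)^{2/3}K^{1/3}\leqslant C^{2/3}K^{1/3}=\Oc(\epsilon^{-1/2})$ and $\sum_j\norml{\s_j}\leqslant C^{1/3}K^{2/3}=\Oc(\epsilon^{-1})$. Substituting, the dominant mini-batch term is $\Oc(m^2\epsilon^{-3/2})\log(d/(\epsilon\delta))$ and the subdominant one is $\Oc(m\epsilon^{-3/2})\log(d/(\epsilon\delta))$.

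Putting the two contributions together, the total Hessian cost is $\Oc\bigl((N/m+m^2)\epsilon^{-3/2}\bigr)\log(d/(\epsilon\delta))$. Minimizing $N/m+m^2$ over $m$ gives the optimal inner-loop length $m=\Theta(N^{1/3})$, for which $N/m+m^2=\Oc(N^{2/3})$, yielding the claimed bound $\Oc\bigl(N^{2/3}\epsilon^{-3/2}\log(8d/(\epsilon\delta))\bigr)$. The main obstacle I anticipate is the mini-batch summation: correctly extracting the factor $m$ from the inner-loop structure via the triangle inequality and Cauchy--Schwarz, and then reducing $\sum_j\norml{\s_j}^2$ to the cubic budget via Hölder, so that balancing the full-batch and mini-batch costs produces exactly the $N^{2/3}$ scaling. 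A secondary point requiring care is the apparent circularity between the union bound and $K$: because $K\leqslant\Oc(\epsilon^{-3/2})$ is a deterministic upper bound whenever \Cref{assumption} holds, $\zeta$ can be fixed from this upper bound and the high-probability argument closes without circular reasoning.
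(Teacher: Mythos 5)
Your proposal is correct and follows essentially the same route as the paper's proof: bound the mini-batch cost via \Cref{With_Replacement_Gradient} with denominators lowered to $\epsilon_1$, control $\norml{\x_i-\tilde\x}$ through the triangle inequality, Cauchy--Schwarz, and H\"older against the cubic budget $\sum_i\norml{\s_i}^3\leqslant C$ from \Cref{convergence_thm}, add the full-batch cost $kN/m$, minimize $N/m+m^2$ at $m^\star=N^{1/3}$, and close with a union bound using $\zeta=\Theta(\delta/k)$ to obtain the $\log(8d/(\epsilon\delta))$ factor. Your remark on avoiding circularity in the union bound (fixing $\zeta$ from the deterministic iteration bound) is exactly the reasoning implicit in the paper's argument.
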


We next compare the total   Hessian sample complexity of SVRC with that of other CR-type algorithms, which are given below.
\begin{align}
\text{SVRC:}  &\quad  \sum_{i=1}^{K} | \xi_{H}(i)|    = \tilde{\Oc} \left(\frac{ N^{2/3}}{\epsilon^{3/2}}\right), \label{SVRC_Hessian_complexity} \\
	\text{SVRC (ZXG):}  &\quad  \sum_{i=1}^{K} | \xi_{H}(i)|    = \Oc \left(\frac{ N^{4/5}}{\epsilon^{3/2}}\right), \label{gu_bound} \\
		\text{CR:}	 &\quad	  \sum_{i=1}^{K} | \xi_{H}(i)|  \leqslant   \Oc \left(\frac{ N }{\epsilon^{3/2}}\right), 
		\label{CR_Hessian_complexity} \\
		\text{SCR:}   &\quad \sum_{i=1}^{K} | \xi_{H}(i)| \leqslant  \Oc \left(\frac{1}{\epsilon^{5/2}}\right) \label{SCR_Hessian_complexity}.
\end{align}

Comparing \cref{SVRC_Hessian_complexity,gu_bound,CR_Hessian_complexity}. Clearly, our SVRC has lower total sample complexity than CR and SVRC(ZXG) by an order of $O(N^{1/3})$ and $O(N^{2/15})$, respectively. Therefore, our stochastic variance reduction scheme is sample efficient when applied to CR type of methods.  
 Also, comparing the sample complexity of the two subsampled algorithms in \cref{SVRC_Hessian_complexity,SCR_Hessian_complexity}, we observe that SVRC enjoys a lower-order complexity bound than SCR if $\epsilon =o(  N^{-2/3})$, and hence performs better in the high accuracy regime.


%

\section{SVRC under Sampling without Replacement Scheme}
In this section, we explore the sample complexity of SVRC under the sampling without replacement scheme, which is commonly used in practice. 

%

To this end, we first develop some technical concentration inequalities in the next subsection.

\subsection{Concentration Inequality under Sampling without Replacement}

The statistics of sampling without replacement is very different and more stable than that of sampling with replacement. However, theoretical analysis of sampling {\em without} replacement turns out to be very difficult. A common approach is to apply the concentration bound for sampling with replacement, which also holds for sampling without replacement \citep{Tropp2012}. However, such analysis can be too loose to capture the essence of the scheme of sampling without replacement. For example, the sample complexity for sampling {\em with} replacement to achieve a certain accuracy can be {\em unbounded}, whereas sampling {\em without} replacement can at most sample the total sample size. 

Thus, in order to develop a tight sample complexity bound for SVRC under sampling without replacement, we first leverage a recently developed Hoeffding-type of concentration inequality for sampling {\em without} replacement \citep{bardenet2015}. There, the result is applicable only for scalar random variables, whereas our analysis here needs to deal with  sub-sampled gradients and Hessians, which are vectors and matrices. This motivates us to first establish the matrix version of the Hoeffding-Serfling inequality. Such a concentration bound can be of independent interest in various other domains. The proof turns out to be very involved and is provided in the supplementary materials.  
 
 \begin{thm} \label{Matrix_Hoeffding_Serfling_Inequality}
 	Let $\mathcal{X}:= \{\A_1, \cdots, \A_N\}$ be a collection of real-valued matrices in $\RR^{d_1\times d_2}$ with bounded spectral norm, i.e.,  $\norml{\A_i} \leqslant\sigma$ for all $i = 1, \ldots, N$ and some $\sigma > 0$. 
 	Let $\X_1, \cdots, \X_n$ be $n<N$ samples from $\mathcal{X}$ under the sampling without replacement. Denote $\mu := \frac{1}{N} \sum_{i = 1}^{N} \A_i$. Then,  for any $\epsilon > 0$, the  following bound holds.
 	\begin{align*}
 	P & \bigg(\bigg\|  \frac{1}{n}\sum_{i=1}^{n}  \X_i -   \mu\bigg\|  \geqslant   \epsilon \bigg)  \\
 	&\quad   \quad    \leqslant    2(d_1 + d_2) \exp \bigg( -  \frac{n \epsilon^2}{8 \sigma^2 (1+1/n) (1- n/N)}\bigg).
 	\end{align*}
 \end{thm}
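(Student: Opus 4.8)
# Proof Proposal for Theorem \ref{Matrix_Hoeffding_Serfling_Inequality}

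The plan is to reduce the matrix concentration problem to the scalar Hoeffding-Serfling inequality of \cite{bardenet2015} via a dilation-and-variational argument, since the key novelty here is lifting a scalar without-replacement bound to random matrices. First I would symmetrize the problem to handle the rectangular case: define the Hermitian dilation $\mathcal{D}(\A) = \begin{bmatrix} \zero & \A \\ \A^\top & \zero \end{bmatrix} \in \RR^{(d_1+d_2)\times(d_1+d_2)}$, which satisfies $\norml{\mathcal{D}(\A)} = \norml{\A}$ and is linear in $\A$. Thus $\norml{\frac{1}{n}\sum_i \X_i - \mu} = \norml{\mathcal{D}(\frac{1}{n}\sum_i \X_i - \mu)} = \lambda_{\max}(\pm \mathcal{D}(\cdot))$, reducing everything to controlling the largest eigenvalue of a symmetric $(d_1+d_2)$-dimensional matrix, which explains the $d_1+d_2$ prefactor.

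The core step is to control the spectral norm through a variational (unit-vector) reduction combined with the scalar result. For a fixed unit vector $\ub \in \RR^{d_1+d_2}$, the quantity $\ub^\top \mathcal{D}(\X_i) \ub$ is a \emph{scalar} random variable drawn without replacement from the fixed population $\{\ub^\top \mathcal{D}(\A_j)\ub\}_{j=1}^N$, each bounded in absolute value by $\sigma$. Applying the scalar Hoeffding-Serfling inequality of \cite{bardenet2015} to this one-dimensional projection yields, for each fixed $\ub$, a bound of the form $P(|\frac{1}{n}\sum_i \ub^\top(\mathcal{D}(\X_i)-\mathcal{D}(\mu))\ub| \geqslant \epsilon) \leqslant 2\exp(-n\epsilon^2/(8\sigma^2(1+1/n)(1-n/N)))$, where the factor $(1+1/n)(1-n/N)$ is exactly the without-replacement variance-reduction factor tracked by the scalar inequality. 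The plan is then to pass from fixed $\ub$ to the supremum over $\ub$ using the matrix Laplace-transform / Ahlswede-Winter machinery adapted to the without-replacement setting, i.e. bound $\Ebb\, \tre(\theta \sum_i \mathcal{D}(\X_i-\mu))$ and apply a union-type trace argument that converts the per-direction exponential bound into the operator-norm bound while absorbing the dimension into the $(d_1+d_2)$ factor.

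The hardest part will be making the scalar-to-matrix lift rigorous \emph{while preserving the without-replacement structure}, because the standard matrix Chernoff/Laplace transform method relies on the independence (or martingale) structure of sampling \emph{with} replacement, and without-replacement samples are negatively dependent. I expect the real technical content to lie in establishing a matrix analogue of the moment-generating-function bound that Serfling's reduction exploits — namely controlling $\Ebb\,\tre(\theta\, S_n)$ where $S_n = \sum_{i=1}^n \mathcal{D}(\X_i - \mu)$ under sampling without replacement. The natural route is to follow \cite{bardenet2015} in using Serfling's observation that the without-replacement partial sums form a reverse martingale, and then combine this with Lieb's concavity theorem (as in Tropp's matrix concentration framework) to push the trace-exponential through the conditional expectations. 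Optimizing the resulting bound over the free parameter $\theta$, and verifying that the variance proxy carries the $(1-n/N)$ finite-population correction, should then deliver the stated constant $8\sigma^2(1+1/n)(1-n/N)$; the factor $2(d_1+d_2)$ arises from the two-sided bound ($\pm\lambda_{\max}$) on the $(d_1+d_2)$-dimensional dilation.
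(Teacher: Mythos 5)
Your proposal really contains two different plans, and only one of them works. The one in your final paragraph --- symmetrize by the Hermitian dilation, then bound $\Ebb \, \tre (\lambda \sum_{i}(\X_i - \mu))$ directly by exploiting the reverse-martingale structure of without-replacement partial sums from \cite{bardenet2015}, push the trace exponential through the conditional expectations via Lieb's concavity theorem (Tropp's Corollary 3.3), insert a matrix moment-generating-function bound, and optimize the free parameter at the end --- is precisely the paper's proof. The paper's only imports from \cite{bardenet2015} are structural: the reverse-martingale identities for the running averages $\Z_k$ and the elementary estimate $\sum_{t=1}^{n}(N-t)^{-2} \leqslant \frac{n}{(N-n)^2}\bigl(1-\frac{n-1}{N}\bigr)$, which is where the finite-population factor $(1-n/N)$ enters; the scalar Hoeffding--Serfling \emph{tail bound} itself is never invoked. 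In its place the paper proves a matrix Hoeffding lemma: for a zero-mean symmetric random matrix with $a\I \preccurlyeq \X \preccurlyeq b\I$, one has $\Ebb[e^{\lambda \X}] \preccurlyeq \exp\bigl(\tfrac{1}{8}\lambda^2(b-a)^2 \I\bigr)$, a bound in the semidefinite order.

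The step you call ``the core step,'' by contrast, is a dead end. Applying the scalar Hoeffding--Serfling inequality to the fixed-direction projections $\ub^\top \mathcal{D}(\X_i)\ub$ and then passing to the supremum over $\ub$ cannot deliver the stated bound for two reasons. First, a union over directions requires a net of the unit sphere in $\RR^{d_1+d_2}$, whose cardinality is exponential in $d_1+d_2$; that route produces a prefactor of the form $C^{\,d_1+d_2}$, not $2(d_1+d_2)$. Second, and more fundamentally, per-direction scalar bounds cannot be fed into the trace--Laplace machinery at all: $\tre(\lambda S_n)$ is evaluated in the \emph{random} eigenbasis of $S_n$, so the iteration through conditional expectations requires a matrix inequality $\Ebb[e^{\lambda \X}\,|\,\mathcal{F}] \preccurlyeq \exp(c\lambda^2 \I)$ holding uniformly in the semidefinite order --- which is exactly what the paper's matrix Hoeffding lemma supplies, and which quadratic forms along finitely many fixed directions do not imply. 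So the scalar result is not the engine of the lift; the matrix MGF bound is. If you strike the projection step and let your final paragraph carry the argument, your outline coincides with the paper's proof.
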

 
To further understand the above theorem, consider symmetric random matrix $\X_i \in \RR^{d\times d}$. Suppose we want $ \norml{  \frac{1}{n}\sum_{i=1}^{n}  \X_i -   \mu}  \leqslant   \epsilon  $ to hold with probability $1- \zeta$. Then the above theorem requires  the sample size to satisfy
\begin{align} \label{sample_size_without_raplacement}
	n_{w} \geqslant \bigg(\frac{1}{N} + \frac{\epsilon^2}{16 \sigma^2 \log(4d)/ \zeta)}\bigg)^{-1}.
\end{align} 

We consider two regimes to understand the bound in \cref{sample_size_without_raplacement}. (a) Low accuracy regime: Suppose $\epsilon$ is large enough so that the second term in \cref{sample_size_without_raplacement} dominates. In this case, we roughly have $n_w \geqslant \frac{16 \sigma^2 \log(4d/\zeta)}{\epsilon^2}$, which has the same order as the suggested sample size by the matrix version of the Hoeffding inequality for sampling {\em with} replacement given below
\begin{align} \label{sample_size_with_raplacement}
n_b \geqslant \frac{8 \sigma^2 \log(2d/\zeta)}{\epsilon^2}.
\end{align}
Thus, the sample size is approximately the same for sampling with and without replacement to achieve a low accuracy concentration. (b) High accurary regime: Suppose $\epsilon$ is small enough so that the first term  in \cref{sample_size_without_raplacement} dominates. Hence, \cref{sample_size_without_raplacement} roughly reduces to $n_w \geqslant N$, whereas the matrix version of the Hoeffding bound in \cref{sample_size_with_raplacement} for sampling with replacement requires infinite samples as $\epsilon \to 0$. Thus, the sample size is highly different for sampling with and without replacement to achieve a high accuracy concentration.

\vspace{-.1cm}
\subsection{Per-iteration Sample Complexity}
\vspace{-.1cm}
We apply \Cref{Matrix_Hoeffding_Serfling_Inequality} to analyze the sample complexity of SVRC under sampling without replacement. Our next theorem characterizes the sample size needed for SVRC in order to satisfy the inexact condition in \Cref{assumption}.  
 \begin{thm} \label{SVRC_Without_Replacement}
 Let Assumption  \ref{assum: obj} hold. Consider SVRC under sampling without replacement. The sub-sampled mini-batches of gradients $\g_k, k = 0, 1, \ldots$ satisfy \cref{eq: inexact_g} with probability at least $1 - \zeta$ provided that 
 	\begin{align}\label{eq:svrc_nrep_g}
 	|\xi_{g}(k)| \!\geqslant\! \bigg(\frac{1}{N} + \frac{\beta^2 \max \{\norml{  \s_{k}}^4, \epsilon_1^4\}}{64 L_1^2 \norml{\x_{k} - \tilde{\x}}^2 \log(2(d+1)/\zeta)}\bigg)^{-1}\!\!\!,
 	\end{align} 

Furthermore, the sub-sampled mini-batches of Hessians $\Hb_k, k = 0, 1, \ldots$ satisfy \cref{eq: inexact_H} with probability at least $1 - \zeta$ provided that 
 	\begin{align}\label{eq:svrc_nrep_h}
 	|\xi_{H}(k)| \geqslant\left( {\frac{1}{N} + \frac{\alpha^2 \max \{\norml{\s_k}^2, \epsilon_1^2\}}{64 L_2^2 \norml{\x_{k} - \tilde{\x}}^2 \log(4d/\zeta)}}\right)^{-1}.
 	\end{align} 
 \end{thm}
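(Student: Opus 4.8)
The plan is to reduce both statements to a single application of the matrix Hoeffding--Serfling inequality (\Cref{Matrix_Hoeffding_Serfling_Inequality}), exploiting the fact that the variance-reduced estimators in \Cref{SVRC} have errors that are \emph{exactly} centered sampling-without-replacement averages. First I would rewrite the gradient error. Writing $n = |\xi_{g}(k)|$ and $\A_i \triangleq \nabla f_i(\x_k) - \nabla f_i(\tilde{\x})$, and using that at the outer-loop anchor $\tilde{\g} = \nabla F(\tilde{\x})$ together with $\mu \triangleq \frac{1}{N}\sum_{i=1}^N \A_i = \nabla F(\x_k) - \nabla F(\tilde{\x})$, the defining formula for $\g_k$ gives
\[ \g_k - \nabla F(\x_k) = \frac{1}{n}\sum_{i\in\xi_{g}(k)} \A_i - \mu, \]
which is precisely the quantity controlled by \Cref{Matrix_Hoeffding_Serfling_Inequality} (viewing each gradient as a $d\times 1$ matrix, so that $d_1+d_2 = d+1$). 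The identical manipulation with $\A_i \triangleq \nabla^2 f_i(\x_k) - \nabla^2 f_i(\tilde{\x})$ and $\mu = \nabla^2 F(\x_k) - \widetilde{\Hb}$ yields $\Hb_k - \nabla^2 F(\x_k) = \frac{1}{n}\sum_{i\in\xi_{H}(k)}\A_i - \mu$ for the Hessian, now treated as a symmetric matrix in $\RR^{d\times d}$ so that $d_1+d_2 = 2d$.

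Next I would supply the uniform spectral bound that \Cref{Matrix_Hoeffding_Serfling_Inequality} requires. By the $L_1$-Lipschitzness of $\nabla f_i$ in \Cref{assum: obj}, $\norml{\A_i} = \norml{\nabla f_i(\x_k) - \nabla f_i(\tilde{\x})} \le L_1\norml{\x_k - \tilde{\x}} =: \sigma_g$, and by the $L_2$-Lipschitzness of $\nabla^2 f_i$, $\norml{\A_i} \le L_2\norml{\x_k - \tilde{\x}} =: \sigma_H$. Crucially these scales vanish as the inner iterate approaches the anchor $\tilde{\x}$, which is the mechanism behind the variance reduction. I would then instantiate \Cref{Matrix_Hoeffding_Serfling_Inequality} with target accuracy equal to the right-hand side of the modified inexact conditions in \Cref{assumption}, namely $\beta\max\{\norml{\s_k}^2,\epsilon_1^2\}$ for the gradient (\cref{new_inexact_condition2}) and $\alpha\max\{\norml{\s_k},\epsilon_1\}$ for the Hessian (\cref{new_inexact_condition1}); the dimension counts above produce the logarithmic prefactors $\log(2(d+1)/\zeta)$ and $\log(4d/\zeta)$ that appear in \cref{eq:svrc_nrep_g,eq:svrc_nrep_h}.

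The remaining work, and the main technical obstacle, is to invert the tail bound into a clean lower bound on $n$. Requiring the exponential tail to be at most $\zeta$ and using $1+1/n \le 2$ while \emph{retaining} the factor $1-n/N$ gives an inequality of the form $n(\epsilon')^2 \ge c\,\sigma^2 L\,(1 - n/N)$, where $L$ is the log prefactor, $\epsilon'$ the target accuracy, and $c$ the numerical constant carried through the $(1+1/n)$ simplification and the spectral range of the centered summands. The subtlety is that $n$ appears on both sides: rather than discarding $1-n/N \le 1$ (which would merely recover the with-replacement bound \cref{sample_size_with_raplacement}), I would collect the $n$-terms to get $n(\epsilon')^2 + c\,\sigma^2 L\, n/N \ge c\,\sigma^2 L$ and solve to obtain the harmonic-mean form
\[ n \ge \left(\frac{1}{N} + \frac{(\epsilon')^2}{c\,\sigma^2 L}\right)^{-1}, \]
exactly as in the scalar reduction \cref{sample_size_without_raplacement}. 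Substituting $(\sigma_g,\beta\max\{\norml{\s_k}^2,\epsilon_1^2\})$ and $(\sigma_H,\alpha\max\{\norml{\s_k},\epsilon_1\})$ into this expression then yields \cref{eq:svrc_nrep_g,eq:svrc_nrep_h}. The appearance of the additive $1/N$ inside the inverse is precisely what caps the required sample size at $N$ and distinguishes sampling without replacement from sampling with replacement; I expect the only delicate points beyond routine bookkeeping to be this inversion and the careful tracking of the constant $c$ (and of the centering, which affects the effective spectral bound) so that the denominators match those stated in the theorem.
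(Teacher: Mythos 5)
Your proposal is correct and follows essentially the same route as the paper's proof: a single application of \Cref{Matrix_Hoeffding_Serfling_Inequality} with the target accuracy set to the inexactness thresholds of \Cref{assumption} (and with $d_1+d_2=d+1$ for gradients, $2d$ for Hessians), followed by exactly the inversion you describe --- bounding $1+1/n\leqslant 2$, retaining the factor $1-n/N$, and collecting the $n$-terms to get the harmonic-mean form. The only substantive difference is the decomposition. The paper centers the summands, taking $\X_i = \nabla^2 f_i(\x_k)-\nabla^2 f_i(\tilde{\x})+\nabla^2 F(\tilde{\x})-\nabla^2 F(\x_k)$, so that $\mu=\mathbf{0}$ and the spectral bound picks up a factor of two, $\norml{\X_i}\leqslant 2L_2\norml{\x_k-\tilde{\x}}$; plugging $\sigma=2L_2\norml{\x_k-\tilde{\x}}$ into $16\sigma^2$ is precisely where the constant $64$ in \cref{eq:svrc_nrep_h} (and likewise in \cref{eq:svrc_nrep_g}) comes from. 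You instead keep the uncentered differences $\A_i=\nabla f_i(\x_k)-\nabla f_i(\tilde{\x})$ with nonzero mean $\mu$, which is equally legitimate since \Cref{Matrix_Hoeffding_Serfling_Inequality} is stated for an arbitrary collection mean; but then $\sigma$ is only $L_1\norml{\x_k-\tilde{\x}}$ (resp.\ $L_2\norml{\x_k-\tilde{\x}}$), and carrying your derivation through literally produces $16$ rather than $64$ in the denominators. This is not an error: your threshold is smaller than the paper's, so the theorem as stated follows a fortiori, and the "careful tracking of the centering" you flag at the end resolves exactly this way --- center the summands if you want to reproduce the paper's constants verbatim, or keep your version and obtain a marginally sharper bound.
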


In order to further understand the sample complexity in \Cref{SVRC_Without_Replacement} and what improvement that SVRC makes in terms of sample complexity compared to the SCR algorithm in \cite{kohler2017}, we next characterize the corresponding sample complexity for SCR under sampling without replacement below. (We note that the sample complexity for SCR under sampling with replacement was provided in \cite{kohler2017}.)
\begin{proposition} \label{SCR_Without_Replacement}
Let Assumptions \ref{assum: obj} hold. Consider the SCR algorithm in \cite{kohler2017} under sampling without replacement. The sub-sampled mini-batch of gradients $\g_k, k = 0, 1, \ldots$ satisfies \cref{eq: inexact_g} with probability at least $1 - \zeta$ provided that for all $k$
	\begin{align}\label{eq:scr_nrep_g}
	|\xi_{g}(k)| \geqslant \bigg(\frac{1}{N} + \frac{C_1^2\norml{\x_{k+1} - \x_{k}}^4}{64 L_0^2   \log(2(d+1)/\zeta)}\bigg)^{-1}. 
	\end{align}
Furthermore, the sub-sampled mini-batch of Hessians $\Hb_k, k = 0, 1, \ldots$ satisfies \cref{eq: inexact_H} with probability at least $1 - \zeta$ provided that for all $k$
	\begin{align}\label{eq:scr_nrep_h}
	 |\xi_{H}(k)| \geqslant \bigg(\frac{1}{N} + \frac{C_2^2\norml{\x_{k+1} - \x_{k}}^2}{64 L_1^2   \log(4d/\zeta)}\bigg)^{-1}.
	\end{align}
\end{proposition}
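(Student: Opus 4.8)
The plan is to obtain \Cref{SCR_Without_Replacement} as a direct corollary of the matrix Hoeffding--Serfling bound in \Cref{Matrix_Hoeffding_Serfling_Inequality}. The key observation is that for the SCR algorithm the estimators $\g_k$ and $\Hb_k$ are \emph{plain} (not variance-reduced) sub-sampled averages of the per-component gradients $\nabla f_i(\x_k)$ and Hessians $\nabla^2 f_i(\x_k)$, whose averages over the full index set $\{1,\dots,N\}$ are exactly $\nabla F(\x_k)$ and $\nabla^2 F(\x_k)$. Hence each deviation $\g_k-\nabla F(\x_k)$ and $\Hb_k-\nabla^2 F(\x_k)$ is precisely the ``sample average minus population mean'' quantity controlled by \Cref{Matrix_Hoeffding_Serfling_Inequality}, and the proof reduces to choosing the correct variance proxy $\sigma$ and target accuracy, then inverting the exponential tail to solve for the batch size, exactly as was done to produce \cref{sample_size_without_raplacement}.

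First, for the gradients I would view $\{\nabla f_1(\x_k),\dots,\nabla f_N(\x_k)\}$ as a collection of $d\times 1$ matrices. By the $L_0$-Lipschitz continuity of each $f_i$ in \Cref{assum: obj} we have $\norml{\nabla f_i(\x_k)}\leqslant L_0$, so the variance proxy is $\sigma=L_0$ and $(d_1,d_2)=(d,1)$, i.e.\ $2(d_1+d_2)=2(d+1)$. To enforce \cref{eq: inexact_g}, namely $\norml{\g_k-\nabla F(\x_k)}\leqslant C_1\norml{\s_{k+1}}^2$, I set the target accuracy to $C_1\norml{\x_{k+1}-\x_k}^2$ in \Cref{Matrix_Hoeffding_Serfling_Inequality}, force the resulting tail probability to be at most $\zeta$, take logarithms, and bound $1+1/n\leqslant 2$ (valid for $n\geqslant 1$). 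Rearranging for $n=|\xi_g(k)|$ and collecting the $n/N$ term gives the threshold $\big(\tfrac1N+\tfrac{C_1^2\norml{\x_{k+1}-\x_k}^4}{64L_0^2\log(2(d+1)/\zeta)}\big)^{-1}$ of \cref{eq:scr_nrep_g}.

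Next, for the Hessians I would apply the same reasoning to $\{\nabla^2 f_1(\x_k),\dots,\nabla^2 f_N(\x_k)\}$ as $d\times d$ matrices, using the $L_1$-Lipschitz continuity of $\nabla f_i$ to get $\norml{\nabla^2 f_i(\x_k)}\leqslant L_1$, hence $\sigma=L_1$ and $2(d_1+d_2)=4d$. The only extra step is to reduce the product-form condition \cref{eq: inexact_H}, $\norml{(\Hb_k-\nabla^2 F(\x_k))\s_{k+1}}\leqslant C\norml{\s_{k+1}}^2$, to a spectral-norm condition: since $\norml{(\Hb_k-\nabla^2 F(\x_k))\s_{k+1}}\leqslant\norml{\Hb_k-\nabla^2 F(\x_k)}\,\norml{\s_{k+1}}$, it suffices to guarantee $\norml{\Hb_k-\nabla^2 F(\x_k)}\leqslant C_2\norml{\x_{k+1}-\x_k}$. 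Taking the target accuracy $C_2\norml{\x_{k+1}-\x_k}$ and performing the identical inversion yields \cref{eq:scr_nrep_h}. If the per-iteration statement is read for each fixed $k$ no union bound is needed; otherwise a union over the gradient and Hessian events preserves the $1-\zeta$ confidence.

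I expect the only delicate points to be bookkeeping rather than conceptual. Matching the numerical constant ($64$ versus the $16$ that a naive $1+1/n\leqslant2$ substitution suggests) requires care about whether one feeds the raw bound $\norml{\A_i}\leqslant\sigma$ or the centered deviation $\norml{\A_i-\mu}\leqslant 2\sigma$ into \Cref{Matrix_Hoeffding_Serfling_Inequality}; using the centered bound supplies the extra factor of $4$. The implicit dependence of the right-hand sides on $\s_{k+1}=\x_{k+1}-\x_k$, which is unavailable until after the subproblem is solved, is inherited from the original SCR analysis and is not resolved here --- it is exactly the circularity that the variance-reduced \Cref{assumption} is designed to sidestep --- so I would simply state the bound for the given $\s_{k+1}$ as in \cite{kohler2017}.
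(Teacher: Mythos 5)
Your proposal is correct and follows essentially the same route as the paper: apply the matrix Hoeffding--Serfling inequality (\Cref{Matrix_Hoeffding_Serfling_Inequality}) to the per-component gradients and Hessians at $\x_k$, take the target accuracy to be $C_1\norml{\s_{k+1}}^2$ for gradients and $C_2\norml{\s_{k+1}}$ for Hessians (after reducing \cref{eq: inexact_H} to a spectral-norm condition via submultiplicativity), bound $1+1/n\leqslant 2$, and invert the tail for the batch size. The only difference is that the paper feeds the \emph{centered} matrices $\nabla^2 f_i(\x_k)-\nabla^2 F(\x_k)$ (resp. $\nabla f_i(\x_k)-\nabla F(\x_k)$) into the inequality with $\sigma=2L_1$ (resp. $2L_0$), which is precisely the source of the constant $64$, exactly as you anticipated in your final remark; your uncentered variant with $\sigma=L_1$ (resp. $L_0$) is equally valid and in fact yields the slightly tighter constant $16$, which implies the stated bound.
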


To compare the sample complexity for SVRC in \Cref{SVRC_Without_Replacement} and SCR in \Cref{SCR_Without_Replacement}, we take the sample complexity for mini-batch of gradients as an example. Comparing \cref{eq:svrc_nrep_g} and \cref{eq:scr_nrep_g}, the second term in the denominator in \cref{eq:svrc_nrep_g} is additionally divided by $\norml{\x_k - \tilde{\x}}^2$, which converges to zero as the algorithms converge. Thus, $\norml{\x_{k+1} - \x_k}^2$ in \cref{eq:scr_nrep_g} converges to zero much faster than $ \frac{\norml{\x_{k+1} - \x_k}^4}{\norml{\x_k - \tilde{\x}}^2}$ in \cref{eq:svrc_nrep_g}, so that the term $1/N$ dominates the denominator and results in the sample size close to the number of total samples much earlier in the iteration of  SCR than SVRC.  

We also note that \Cref{SCR_Without_Replacement} shows that as SCR approaches the convergence, the sample size goes to the total number of samples with technical rigor, whereas such a fact was only intuitively discussed in \cite{kohler2017}.
\vspace{-.1cm}
\subsection{Total Sample Complexity}
\vspace{-.1cm}
%
We next characterize the total Hessian sample complexity of SVRC under sampling without replacement.
 
\begin{thm}\label{th:no_rep_totalsample}
	Let Assumptions \ref{assum: obj}  hold. For a given $\epsilon$ and $\delta$ then SVRC under sampling without replacement outputs an point $\x_{k+1}$ such that satisfies $\norm{\nabla F(\xb_{k+1})} \leqslant  \epsilon$ and $\nabla^2F(\xb_{k+1}) \succcurlyeq -\epsilon \bm{I}$ with probability at least $1 - \delta$. Then the total sample complexity for Hessian used in SVRC is bounded by 
	\begin{align} \label{eq:no_rep_totalsample}
	 \sum_{i=0}^{k} |\xi_{H}(k)|  \leqslant  \frac{ C N^{2/3} }{\epsilon^{ 3/2}}\log \left(\frac{ 8d}{\epsilon  \delta  }\right).
	\end{align}
\end{thm}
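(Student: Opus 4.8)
The plan is to split the total Hessian budget into the full-Hessian computations performed at the start of each inner loop and the sub-sampled computations performed inside the inner loops, bound each piece using the per-iteration guarantee of \Cref{SVRC_Without_Replacement} together with the descent estimates of \Cref{convergence_thm}, and finally optimize over the inner-loop length $m$. First I would fix the confidence parameter of the per-iteration bound to $\zeta = \delta/K$, where $K = O(\epsilon^{-3/2})$ is the total iteration count from \Cref{convergence_thm}; a union bound then guarantees that \Cref{assumption} holds at every iteration with probability at least $1-\delta$, and converts the factor $\log(4d/\zeta)$ appearing in \cref{eq:svrc_nrep_h} into the $\log(8d/(\epsilon\delta))$ displayed in \cref{eq:no_rep_totalsample}.

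For the full-Hessian part, each of the $K/m$ outer iterations costs $N$ Hessian samples, contributing $(K/m)\,N = O\!\big(N\epsilon^{-3/2}/m\big)$. For the inner part, I would first use the elementary inequality $(1/N + x)^{-1} \leqslant x^{-1}$ to discard the $1/N$ term in \cref{eq:svrc_nrep_h}, and then use $\epsilon_1 = \tau\sqrt{\epsilon}$ (so that $\max\{\norml{\s_k}^2,\epsilon_1^2\} \geqslant \tau^2\epsilon$) to obtain the per-iteration budget $|\xi_{H}(k)| \lesssim \tau^{-2}\epsilon^{-1}\,\norml{\x_k - \tilde{\x}}^2 \log(4d/\zeta)$. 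It then remains to control $\sum_k \norml{\x_k - \tilde{\x}}^2$.

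The key drift estimate is that within each inner loop $\x_k - \tilde{\x}$ equals the sum of the steps taken since the last snapshot, so Cauchy--Schwarz gives $\sum_{k \text{ in loop}} \norml{\x_k - \tilde{\x}}^2 \leqslant m^2 \sum_{k \text{ in loop}} \norml{\s_k}^2$, and summing over all inner loops yields $\sum_k \norml{\x_k - \tilde{\x}}^2 \leqslant m^2 \sum_k \norml{\s_k}^2$. I would then convert the cubic bound \cref{vector_norm_bound_7} of \Cref{convergence_thm} into a bound on $\sum_k \norml{\s_k}^2$ via H\"older's inequality, $\sum_k \norml{\s_k}^2 \leqslant \big(\sum_k \norml{\s_k}^3\big)^{2/3} K^{1/3} \leqslant C^{2/3}K^{1/3} = O(\epsilon^{-1/2})$. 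Combining, the inner-loop cost is $O\!\big(m^2\epsilon^{-3/2}\log(8d/(\epsilon\delta))\big)$, and the total is $O\!\big((m^2 + N/m)\,\epsilon^{-3/2}\log(8d/(\epsilon\delta))\big)$. Balancing the two terms by choosing $m = \Theta(N^{1/3})$ makes both equal to $N^{2/3}$ and delivers \cref{eq:no_rep_totalsample}. The bound coincides with the with-replacement result of \Cref{Total_Hessian_complexity_bound}, and indeed the argument is structurally the same once the $1/N$ term is dropped; the practical advantage of sampling without replacement is that each $|\xi_{H}(k)|$ is automatically capped at $N$, a feature invisible at the level of this worst-case sum.

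The main obstacle I expect is the rigorous handling of the high-probability and adaptivity issue: the per-iteration sample sizes in \cref{eq:svrc_nrep_h} depend on the realized quantities $\norml{\s_k}$ and $\norml{\x_k - \tilde{\x}}$, which are themselves random and determined by the (randomly sampled) trajectory, so the descent inequality \cref{vector_norm_bound_7} and the per-iteration concentration bounds must be shown to hold simultaneously on a single high-probability event. Making the union bound and the drift-to-cubic-bound conversion compatible with this adaptivity---rather than the final optimization over $m$, which is routine---is where the care is needed.
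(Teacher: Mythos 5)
Your proposal is correct and follows essentially the same route as the paper's own proof: the paper likewise splits the cost into the $kN/m$ full-Hessian evaluations plus the inner-loop subsamples, drops the $1/N$ term in \cref{eq:svrc_nrep_h}, reuses the drift bound $\sum_{i,j}\norml{\x_{i\cdot m+j}-\x_{i\cdot m}}^2 \leqslant m^2 k^{1/3}C^{2/3}$ (triangle inequality, Cauchy--Schwarz, then H\"older against \cref{vector_norm_bound_7}), optimizes $m^\star = N^{1/3}$, and applies a union bound with $\zeta = O(\delta/k)$ to obtain the $\log(8d/(\epsilon\delta))$ factor (the paper uses $\zeta \leqslant \delta/(2k)$ to cover gradient and Hessian failures jointly, a negligible difference). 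Your closing caveat about the adaptivity of the random sample sizes to the trajectory is well taken---the paper's proof does not treat this point any more carefully than you do.
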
 

In this theorem, we show that  total sample complexity of SVRC under sampling without replacement is at least as good as SVRC under sampling with replacement. And the comparison of this bound with other bound follows similarly as we discuss in \Cref{with_sec_2}.

%
%

 
\section{Discussion}
 \textbf{Storage Issue:} The proposed algorithm involves the storage of a Hessian, which requires $O(d^2)$ space for storage. In this perspective,  the proposed algorithm can be directly applied for solving small or medium scale machine learning problems. As for large scale problems, using  PCA to store the main component of Hessian can be a possible solution. 
 
 \textbf{With and Without replacement:}
 We show that the total sample complexity of SVRC under sampling without replacement is at least as good as SVRC under sampling with replacement.  Actually, if we compare the per iteration complexity of the two, i.e., we compare \Cref{SVRC_Without_Replacement} with \Cref{With_Replacement_Gradient}, the without replacement scheme has a better complexity than that with replacement in each iteration since there is  a $1/N$ term in the denominator on the bound for the scheme without replacement. This does suggest the same total sample complexity for the two schemes is likely due to the technicality issue.
 
 
\section{Conclusion} 
In this paper, we proposed a stochastic variance-reduced cubic regularization method. We characterized the per iteration sample complexity for Hessian and gradient that guarantees convergence of SVRC to a second-order optimality condition, under both sampling with and without replacement. We also developed the total sample size for Hessian. Our theoretic results imply that SVRC outperforms the state-of-art result by an factor of $O(N^{2/15})$. Moreover, Our study demonstrates that variance reduction can bring substantial advantage in sample size as well as computational complexity for second-order algorithms, along which direction we plan to explore further in the future.

\bibliographystyle{apalike}  
\bibliography{ref}

\newpage 
\onecolumn
\appendix
\noindent {\Large \textbf{Supplementary Materials}}

\section{Proof of Convergence}
\subsection{Lemmas}
In this subsection, we introduce two useful lemmas, which will be used in the proof of convergence.
\begin{lemma}[\cite{Nesterov2006}, Lemma 1] \label{Hessian_square_bound}
	Let the Hessian $\nabla^2 f(\cdot)$ of the function   $f(\cdot)$ be $L $-Lipschitz continuous with $L  > 0$. Then, for any $\x, \mathbf{y} \in \mathbb{R}^d$, we have
	\begin{align}
	\norml{\nabla f( \mathbf{y} ) - \nabla f(\x) - \nabla^2 f(\x )(\mathbf{y} - \x ) }  &\leqslant  \frac{L }{2} \norml{\mathbf{y}  - \x}^2,  \label{gradient_bound}\\
	\biggl| f(\mathbf{y}) - f(\x) - \nabla f(\x)^T(\mathbf{y}  - \x)     - \frac{1}{2} (\mathbf{y}  - \x)^T  \nabla^2 f(\x )(\mathbf{y} - \x ) \biggr|
	&\leqslant \frac{L }{6}  \norml{\mathbf{y}  - \x}^3. \label{function_vaule}
	\end{align}
\end{lemma}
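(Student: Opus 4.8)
The plan is to prove both inequalities by the fundamental theorem of calculus along the segment joining $\x$ and $\mathbf{y}$, using only the $L$-Lipschitz continuity of $\nabla^2 f$ to bound the integrands; throughout I treat $f$ as twice continuously differentiable so that the integral representations below are valid.

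First I would establish \cref{gradient_bound}. Writing $\nabla f(\mathbf{y}) - \nabla f(\x) = \int_0^1 \nabla^2 f(\x + t(\mathbf{y}-\x))(\mathbf{y}-\x)\, dt$ and subtracting $\nabla^2 f(\x)(\mathbf{y}-\x) = \int_0^1 \nabla^2 f(\x)(\mathbf{y}-\x)\, dt$ gives
\begin{align*}
\nabla f(\mathbf{y}) - \nabla f(\x) - \nabla^2 f(\x)(\mathbf{y}-\x) = \int_0^1 \big[\nabla^2 f(\x + t(\mathbf{y}-\x)) - \nabla^2 f(\x)\big](\mathbf{y}-\x)\, dt.
\end{align*}
Pulling the norm inside the integral and applying Lipschitz continuity of the Hessian bounds the integrand by $L t\norml{\mathbf{y}-\x}^2$; since $\int_0^1 t\,dt = 1/2$, this yields the factor $L/2$.

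For \cref{function_vaule} I would apply the fundamental theorem of calculus to $f$ itself and represent the constant, linear, and quadratic terms as integrals over $[0,1]$ (using $\int_0^1 dt = 1$ and $\int_0^1 t\,dt = 1/2$), so that the left-hand side collapses to
\begin{align*}
\int_0^1 \big[\nabla f(\x + t(\mathbf{y}-\x)) - \nabla f(\x) - t\,\nabla^2 f(\x)(\mathbf{y}-\x)\big]^\top (\mathbf{y}-\x)\, dt.
\end{align*}
The key observation is that the bracketed vector is precisely the quantity controlled by \cref{gradient_bound}, now with $\x + t(\mathbf{y}-\x)$ playing the role of $\mathbf{y}$, so its norm is at most $\frac{L}{2}t^2\norml{\mathbf{y}-\x}^2$. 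Applying Cauchy--Schwarz to the inner product and integrating $\int_0^1 t^2\,dt = 1/3$ then produces $\frac{L}{6}\norml{\mathbf{y}-\x}^3$.

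Since this is a standard Taylor-with-integral-remainder estimate, I do not anticipate a genuine obstacle. The only step demanding care is the bookkeeping in \cref{function_vaule}: one must write the second-order term as $\int_0^1 t(\cdot)\,dt$ so that the first inequality can be substituted verbatim inside the integral, and then track the constants $1/2$ and $1/3$ that combine to give the final $L/6$.
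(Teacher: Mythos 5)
Your proposal is correct, and there is nothing to compare it against within the paper itself: the paper imports this statement as a known result (citing Lemma~1 of \cite{Nesterov2006}) and gives no proof. Your argument --- integral representation of the Taylor remainder along the segment, Lipschitz bound on the Hessian for \cref{gradient_bound}, then substituting \cref{gradient_bound} inside the integral and applying Cauchy--Schwarz for \cref{function_vaule} --- is exactly the standard proof found in the cited reference, with the constants $\int_0^1 t\,dt = 1/2$ and $\int_0^1 t^2\,dt = 1/3$ tracked correctly.
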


\begin{lemma}[\cite{Wang2018inexact}, Lemma 3] \label{subcubic}
	Let $M \in \mathbb{R}, \mathbf{g} \in \mathbb{R}^d, \mathbf{H} \in  \mathbb{S}^{d \times d}$, and 
	\begin{align}
	\s  = \argmin_{\ub \in \mathbb{R}^d} \mathbf{g}^\top \ub + \frac{1}{2} \ub^\top \mathbf{H} \ub  + \frac{M}{6} \norml{\ub}^3. \label{opt}
	\end{align}
	Then, the following statements  hold:
	\begin{align}
	\mathbf{g}  +      \mathbf{H} \s   + \frac{M}{2} \norml{\s } \s  &= \mathbf{0}, \label{opt_1} \\
	\mathbf{H} + \frac{M}{2} \norml{\s } \mathbf{I}    &\succcurlyeq \mathbf{0}, \label{opt_2} \\
	\mathbf{g}^\top \s  + \frac{1}{2}    \s^\top \mathbf{H} \s   + \frac{M}{6} \norml{\s }^3 &\leqslant - \frac{M}{12}\norml{\s }^3. \label{opt_3}
	\end{align}
\end{lemma}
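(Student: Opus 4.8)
The plan is to establish the three conclusions in the order \eqref{opt_1}, \eqref{opt_2}, \eqref{opt_3}, extracting each from the fact that $\s$ is a \emph{global} minimizer of $m(\ub) := \g^\top\ub + \tfrac12\ub^\top\Hb\ub + \tfrac{M}{6}\norml{\ub}^3$. For \eqref{opt_1}, the key observation is that $\norml{\ub}^3 = (\ub^\top\ub)^{3/2}$ is continuously differentiable on all of $\RR^d$ (including at the origin) with gradient $3\norml{\ub}\ub$. Hence $m$ is $C^1$ with $\nabla m(\ub) = \g + \Hb\ub + \tfrac{M}{2}\norml{\ub}\ub$, and the stationarity equation $\nabla m(\s) = \zero$ is precisely \eqref{opt_1}. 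Throughout I take $M>0$, the regime in which the cubic model is coercive and the argmin is well defined.

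The crux of the lemma is \eqref{opt_2}, and I expect this to be the main obstacle: a purely local second-order test at $\s$ yields only the weaker condition $\Hb + \tfrac{M}{2}r\I + \tfrac{M}{2r}\s\s^\top \succcurlyeq \zero$ (with $r := \norml{\s}$), so the rank-one correction must be removed by a genuinely global comparison. I would use \eqref{opt_1} to eliminate $\g$, writing $\g = -\Hb\s - \tfrac{M}{2}r\s$; a short computation then gives, for every $\ub$ and $\vb := \ub - \s$,
\begin{align*}
0 \le m(\ub) - m(\s) = \tfrac12\vb^\top\Hb\vb - \tfrac{M}{2}r\,\s^\top\vb + \tfrac{M}{6}\big(\norml{\ub}^3 - r^3\big).
\end{align*}
The trick is to feed in competitors of the \emph{same} norm as $\s$, so that the cubic terms cancel. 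Concretely, for any $\wb$ with $\s^\top\wb \neq 0$ I take $\ub = \s - 2\tfrac{\s^\top\wb}{\norml{\wb}^2}\wb$, the reflection of $\s$ across the hyperplane $\wb^\perp$; then $\norml{\ub} = r$ and $\vb$ is parallel to $\wb$, and the displayed inequality collapses to $\wb^\top(\Hb + \tfrac{M}{2}r\I)\wb \ge 0$. Since such $\wb$ are dense (when $\s \neq \zero$) and the quadratic form is continuous, this extends to all $\wb$, giving \eqref{opt_2}; the degenerate case $\s = \zero$ (where \eqref{opt_1} forces $\g = \zero$) follows by letting $\norml{\ub}\to 0$ in $m(\ub) \ge m(\zero)$.

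Finally, \eqref{opt_3} is a short algebraic consequence of the first two. Substituting $\g = -\Hb\s - \tfrac{M}{2}r\s$ into the left-hand side of \eqref{opt_3} collapses it to $-\tfrac12\s^\top\Hb\s - \tfrac{M}{3}r^3$. Applying \eqref{opt_2} to the vector $\s$ gives $\s^\top\Hb\s \ge -\tfrac{M}{2}r^3$, hence $-\tfrac12\s^\top\Hb\s \le \tfrac{M}{4}r^3$, and combining the two yields $-\tfrac12\s^\top\Hb\s - \tfrac{M}{3}r^3 \le \big(\tfrac14 - \tfrac13\big)M r^3 = -\tfrac{M}{12}r^3$, which is exactly \eqref{opt_3}. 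The only delicate point in the whole argument is the reflection step for \eqref{opt_2}; everything else is differentiation and substitution.
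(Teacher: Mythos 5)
Your proof is correct, but note that the paper itself contains no proof of this lemma: it is quoted verbatim (as Lemma 3 of \cite{Wang2018inexact}, going back to Proposition~1 of \cite{Nesterov2006}), so there is no in-paper argument to compare against. Your derivation is essentially the classical Nesterov--Polyak argument, reconstructed correctly: \eqref{opt_1} from $C^1$ stationarity of the cubic model; \eqref{opt_2} from global optimality tested against norm-preserving competitors so that the cubic terms cancel; and \eqref{opt_3} by substituting $\g = -\Hb\s - \tfrac{M}{2}\norml{\s}\s$ and invoking \eqref{opt_2} at $\s$. Your reflection step is a clean way to package the standard trick (comparing $\s$ against points on the sphere $\norml{\ub}=\norml{\s}$, whose difference vectors sweep a dense set of directions), and your handling of the degenerate case $\s=\zero$ and the density/continuity extension are both sound. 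Two harmless caveats: you restrict to $M>0$ whereas the statement allows $M\in\RR$, but this is the only regime in which the argmin is guaranteed to exist (and the only one the paper uses); and one could note that \eqref{opt_3} also follows with the slightly weaker hypothesis $M=0$, where it degenerates to $-\tfrac12\s^\top\Hb\s\leqslant 0$. Neither affects correctness.
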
 

\subsection{Proof of \Cref{convergence_thm}}  
\begin{proof}
	Since $\nabla^2 f(\x)$ is $L_2$-Lipschitz, thus we have
		\begin{align*}
	f(\x_{k+1} ) - f(\x_{k}) &\numleqslant{i}     \nabla f(\x_k)^\top\s_{k+1}     + \frac{1}{2} \s_{k+1}^\top \nabla f(\x_k)\s_{k+1}  + \frac{L_2}{6} \norml{\s_{k+1}}^3 \\
	& \leqslant \mathbf{g}_k^\top\s_{k+1}     + \frac{1}{2} \s_{k+1}^\top \mathbf{H}_k  \s_{k+1}+ \frac{M}{6} \norml{\s_{k+1}}^3  +  (\nabla f(\x_k) - \mathbf{g}_k)^\top\s_{k+1} \\
	&\qquad + \frac{L_2-M}{6} \norml{\s_{k+1}}^3  +\frac{1}{2} \s_{k+1}^\top (\nabla^2 f(\x_k) - \mathbf{H}_k)\s_{k+1}  \\
	&\numleqslant{ii} - \frac{3M-2L_2}{12}\norml{\s_{k+1} }^3 + (\nabla f(\x_k) - \mathbf{g}_k)^\top\s_{k+1}  + \frac{1}{2} \s_{k+1}^\top (\nabla f(\x_k) - \mathbf{H}_k)\s_{k+1} \numberthis \label{cov_1}
	\end{align*}  
	where (i) follows from \Cref{Hessian_square_bound} with $\mathbf{y} = \x_{k+1}, \x = \x_{k}$ and $\s_{k+1} =\x_{k+1} - \x_{k}$, (ii) follows from \cref{opt_3} in \Cref{subcubic} with $\mathbf{g} = \mathbf{g}_k,  \mathbf{H} =\mathbf{H}_k $ and $\s = \s_{k+1}$, (iii) follows from \Cref{assumption},

	Next, we bound the terms $(\nabla f(\x_k) - \mathbf{g}_k)^\top\s_{k+1}$ and $  \s_{k+1}^\top (\nabla f(\x_k) - \mathbf{H}_k)\s_{k+1}$. For the first term, we have that
	\begin{align*}
	(\nabla f(\x_k) - \mathbf{g}_k)^\top\s_{k+1} &\leqslant \norml{\nabla f(\x_k) - \mathbf{g}_k}\norm{\s_{k+1}} \numleqslant{i} \beta \left(\norm{\s_{k }}^2 + \epsilon_1^2 \right)\norm{\s_{k+1}} = \beta \left(\norm{\s_{k }}^2\norm{\s_{k+1}} + \epsilon_1^2\norm{\s_{k+1}} \right)  \\
	& \numleqslant{ii}  \beta \left(\norm{\s_{k }}^3 + \norm{\s_{k+1}}^3 + \epsilon_1^3 + \norm{\s_{k+1}}^3 \right) =  \beta \left( \norm{\s_{k }}^3 + 2 \norm{\s_{k+1}}^3 + \epsilon_1^3   \right) , \numberthis \label{cov_2}
	\end{align*}
	where (i) follows from \Cref{assumption}, which gives that $\norml{  \g_k - \nabla  F(\x_k)} \leqslant \beta \max \left\{ \norml{  \s_{k}}^2 ,\epsilon_1^2  \right\}$, and (ii) follows from  the inequality that for $a,b \in \mathbb{R}^+$, $a^2b \leqslant a^3 + b^3$, which can be verified by checking the  cases with $a < b$  and $a \geqslant b$, respectively. Similarly, we obtain that
	\begin{align*}
	\s_{k+1}^\top (\nabla f(\x_k) - \mathbf{H}_k)\s_{k+1} &\leqslant \norml{\nabla^2 f(\x_k) - \mathbf{H}_k}\norm{\s_{k+1}}^2 \numleqslant{i} \alpha \left(\norm{\s_{k }}  + \epsilon_1  \right)\norm{\s_{k+1}}^2 = \alpha \left(\norm{\s_{k }} \norm{\s_{k+1}}^2 + \epsilon_1 \norm{\s_{k+1}}^2 \right)  \\
	& \numleqslant{ii}  \alpha \left(\norm{\s_{k }}^3 + \norm{\s_{k+1}}^3 + \epsilon_1^3 + \norm{\s_{k+1}}^3 \right) = \alpha \left(\norm{\s_{k }}^3 + 2\norm{\s_{k+1}}^3 + \epsilon_1^3  \right), \numberthis \label{cov_3}
	\end{align*} 
	where (i) follows from \Cref{assumption}, which gives that $\norml{   \Hb_k - \nabla^2 F(\x_k)  } \leqslant \alpha \max \left\{ \norml{  \s_{k}} ,\epsilon_1  \right\}$, and (ii) follows from  the inequality that for $a,b \in \mathbb{R}^+$, $a^2b \leqslant a^3 + b^3$.
	
	Plugging \cref{cov_2,cov_3} into \cref{cov_1} yields  
	\begin{align*}
	f(\x_{k+1} ) - f(\x_{k}) &\leqslant  - \frac{3M-2L_2}{12}\norml{\s_{k+1} }^3  +  \beta \left(\norm{\s_{k }}^3 + 2\norm{\s_{k+1}}^3 + \epsilon_1^3  \right) + \frac{\alpha}{2}\left(\norm{\s_{k }}^3 + 2\norm{\s_{k+1}}^3 + \epsilon_1^3  \right) \\
	&=  - \left(\frac{3M-2L_2}{12} - 2\beta -\alpha \right)\norml{\s_{k+1} }^3 + \left(\beta + \frac{\alpha}{2}\right)\norm{\s_{k}}^3 + \left(\beta + \frac{\alpha}{2}\right)\epsilon_1^3 \numberthis \label{cov_4}
	\end{align*}
	
	Summing \Cref{cov_4} for $0$ to $k$, we obtain
	\begin{align*}
	f(\x_{k+1}) - f(\x_0) &\leqslant  - \left(\frac{3M-2L_2}{12} - 2\beta -\alpha \right) \sum_{i=1}^{k+1}\norml{\s_{i} }^3 + \left(\beta + \frac{\alpha}{2}\right) \sum_{i=0}^{k} \norm{\s_{i}}^3 + \left(\beta + \frac{\alpha}{2}\right) \sum_{i=0}^{k}\epsilon_1^3 \\
	&\leqslant   - \left(\frac{3M-2L_2}{12} - 2\beta -\alpha \right) \sum_{i=1}^{k+1}\norml{\s_{i} }^3 + \left(\beta + \frac{\alpha}{2}\right) \sum_{i=0}^{k+1} \norm{\s_{i}}^3 + \left(\beta + \frac{\alpha}{2}\right) \sum_{i=0}^{k}\epsilon_1^3 \\
	&\leqslant   - \left(\frac{3M-2L_2}{12} - 3\beta -\frac{3}{2}\alpha \right) \sum_{i=1}^{k+1}\norml{\s_{i} }^3 +    \left(\beta + \frac{\alpha}{2}\right) \norm{\s_{0}}^3 + \left(\beta + \frac{\alpha}{2}\right) \sum_{i=0}^{k}\epsilon_1^3 , \numberthis \label{cov_6}
	\end{align*}  
	We next note that
	\begin{align}
	\sum_{i=1}^{k+1}\norml{\s_{i} }^3 = \frac{1}{2} \left(\sum_{i=1}^{k+1}\norml{\s_{i} }^3 + \sum_{i=1}^{k+1}\norml{\s_{i} }^3 \right) =  \frac{1}{2} \left(\sum_{i=1}^{k+1}\norml{\s_{i} }^3 + \sum_{i=0}^{k}\norml{\s_{i+1} }^3 \right) \geqslant  \frac{1}{2} \sum_{i=1}^{k} \left(\norml{\s_{i} }^3 + \norml{\s_{i+1} }^3 \right).  \label{cov_5}
	\end{align}
	Plugging \cref{cov_5} into \cref{cov_6} yields that
	\begin{align*}
	f(\x_{k+1}) - f(\x_0) &\leqslant  - \sum_{i=1}^{k}    \left(\frac{3M-2L_2}{24} - \frac{3}{2}\beta -\frac{3}{4}\alpha \right)  \left(\norml{\s_{i} }^3 + \norml{\s_{i+1} }^3 \right) +    \left(\beta + \frac{\alpha}{2}\right) \norm{\s_{0}}^3 + \left(\beta + \frac{\alpha}{2}\right) \sum_{i=0}^{k}\epsilon_1^3\\
	&\numleqslant{i} - \sum_{i=1}^{k}  \left(\frac{3M-2L_2}{24} - \frac{5}{2}\beta -\frac{5}{4}\alpha \right)  \left(\norml{\s_{i} }^3 + \norml{\s_{i+1} }^3 \right) +    \left(\beta + \frac{\alpha}{2}\right) \norm{\s_{0}}^3 +\left(\beta + \frac{\alpha}{2}\right) \epsilon_1^3 , 
	\end{align*}
	where (i) follows from the fact that before the algorithm terminates we always have that $\norm{s_i} \geqslant \epsilon_1$ or $\norm{s_{i+1}} \geqslant \epsilon_1$, which gives that $\norm{s_i} ^3 + \norm{s_{i+1}} ^3 \geqslant \epsilon_1^3$.
	Therefore, we have
	\begin{align*}
	\sum_{i=1}^{k}  \left(\frac{3M-2L_2}{24} - \frac{5}{2}\beta -\frac{5}{4}\alpha \right)  \left(\norml{\s_{i} }^3 + \norml{\s_{i+1} }^3 \right) &\leqslant f(\x_0)  - f^*  +    \left(\beta + \frac{\alpha}{2}\right) \norm{\s_{0}}^3 +\left(\beta + \frac{\alpha}{2}\right) \epsilon_1^3 \\
	&\numequ{i}   f(\x_0)  - f^*    +\left(2\beta +  {\alpha} \right) \epsilon_1^3 \numberthis \label{cov_7}
	\end{align*}
	where (i) follows from the fact that $\norm{\s_{0}} = \epsilon_1$. Thus, if the algorithm never terminates, then we always have that   $\norm{s_i} \geqslant \epsilon_1$ or $\norm{s_{i+1}} \geqslant \epsilon_1$, which gives   $\norm{s_i} ^3 + \norm{s_{i+1}} ^3 \geqslant \epsilon_1^3$. Following from \Cref{cov_7}, we obtain that
	\begin{align}
	k \times \gamma \epsilon_1^3 \leqslant f(\x_0)  - f^*    +\left(2\beta +  {\alpha} \right) \epsilon_1^3,  \numberthis \label{cov_8}
	\end{align}
	where $\gamma \triangleq  \left(\frac{3M-2L_2}{24} - \frac{5}{2}\beta -\frac{5}{4}\alpha \right)$. Therefore, we obtain
	\begin{align}
	k \leqslant \frac{f(\x_0)  - f^*    +\left(2\beta +  {\alpha} \right) \epsilon_1^3}{ \gamma \epsilon_1^3 },
	\end{align}
	which shows that the algorithm must terminates if the total number of iterations exceeds $O(  {\epsilon_1^{-3}})$. With the choice of $\epsilon_1$ in \Cref{convergence_thm} , we obtain that the algorithm terminates at most with total iteration $k = O(\epsilon^{-3/2})$. 
	
	Suppose that the algorithm terminates at iteration $k$, then according to the analysis in \cref{cov_7}, we have that
	\begin{align*}
	\sum_{i=1}^{k-1}   \gamma \left(\norml{\s_{i} }^3 + \norml{\s_{i+1} }^3 \right) &\leqslant   f(\x_0)  - f^*    +\left(2\beta +  {\alpha} \right) \epsilon_1^3. \numberthis \label{cov_9}
	\end{align*}
	On the other hand, according to \cref{cov_9} and the terminal condition that  $\norm{s_i} \leqslant \epsilon_1$ and $\norm{s_{i+1}} \leqslant \epsilon_1$, we obtain  
	\begin{align*}
	\sum_{i=1}^{k}   \gamma \left(\norml{\s_{i} }^3 + \norml{\s_{i+1} }^3 \right) &\leqslant   f(\x_0)  - f^*    +\left(2\beta +  {\alpha} + 2\gamma \right) \epsilon_1^3,
	\end{align*}
	which gives that
	\begin{align}
	\sum_{i=1}^{k+1} \norml{\s_{i} }^3 \leqslant \frac{f(\x_0)  - f^*    +\left(2\beta +  {\alpha} + 2\gamma \right) \epsilon_1^3}{\gamma}.
	\end{align}
	
	We next consider the convergence of $\norml{\nabla f(x_k)}$ and $\norml{\nabla^2 f(x_k)}$.
	Next, we prove the convergence rate of $\nabla f(\cdot)$ and $\nabla^2 f(\cdot)$. We first derive 
	\begin{align*}
	\norml{\nabla f(\x_{m+1})} &\numequ{i} \normlarge{ \nabla f(\x_{m+1}) - \left(\mathbf{g}_m + \mathbf{H}_m \s_{m+1} + \frac{M}{2}\norml{\s_{m+1}} \s_{m+1} \right) } \\
	&\leqslant \normlarge{ \nabla f(\x_{m+1}) - \left(\mathbf{g}_m + \mathbf{H}_k \s_{m+1} \right) } + \frac{M}{2} \norml{\s_{m+1}}^2 \\
	&\leqslant \normlarge{ \nabla f(\x_{m+1}) - \nabla f(\x_m) -  \nabla^2 f(\x_m)\s_{m+1} } + \norml{\nabla f(\x_m) - \mathbf{g}_m } + \norml{(\nabla^2 f(\x_m) - \mathbf{H}_m )\s_{m+1}}  + \frac{M}{2} \norml{\s_{m+1}}^2 \\
	&\numleqslant{ii} \frac{L_2}{2} \norml{\s_{m+1}}^2 + \beta (\norml{\s_{m}}^2 + \epsilon_1^2) + \alpha (\norml{\s_{m}}+ \epsilon_1)\norml{\s_{m+1}}  + \frac{M}{2} \norml{\s_{m+1}}^2 \\
	&\numleqslant{iii}   \left(\frac{L+M}{2} + 2\beta + 2\alpha \right)\epsilon_1^2 \numleqslant{iv} \epsilon  ,
	\end{align*}
	where (i) follows from \cref{opt_1} with $\mathbf{g} = \mathbf{g}_m, \mathbf{H} = \mathbf{H}_m$ and $\s = \s_{m+1}$, (ii) follows from \cref{gradient_bound} in \Cref{Hessian_square_bound} and \Cref{assumption},   (iii) follows from the terminal condition of the algorithm, and (iv) follows from \cref{cov_10}.
	
	Similarly, we have 
	\begin{align*}
	\hspace{-8mm}\nabla^2 f(\x_{m+1}) &\overset{(i)}{\succcurlyeq} \mathbf{H}_m - \norml{\mathbf{H}_m -\nabla^2 f(\x_{m+1})} \mathbf{I}  \\
	&\overset{(ii)}{\succcurlyeq} - \frac{M}{2} \norml{\s_{m+1}} \mathbf{I}  - \norml{\mathbf{H}_m -\nabla^2 f(\x_{m+1})} \mathbf{I} \\
	&\succcurlyeq  - \frac{M}{2} \norml{\s_{m+1}} \mathbf{I}  - \norml{\mathbf{H}_m -\nabla^2 f(\x_{m })}\mathbf{I} - \norml{\nabla^2 f(\x_{m}) -\nabla^2 f(\x_{m+1})}\mathbf{I}  \\
	&\overset{(iii)}{\succcurlyeq}  - \frac{M}{2} \norml{\s_{m+1}} \mathbf{I}  -  \alpha (\norml{\s_{m}} + \epsilon_1)\mathbf{I}  -  L_2 \norml{\s_{m+1}}\mathbf{I} \\
	&\overset{(iv)}{\succcurlyeq} - \left( \frac{M+2L_2}{2}+ 2\alpha \right)\epsilon_1  \mathbf{I}  \overset{( v)}{\succcurlyeq} \epsilon \mathbf{I},
	\end{align*}
	where (i) follows from Weyl's inequality, (ii) follows from \cref{opt_2} with $\mathbf{H} = \mathbf{H}_{m}$ and $\s = \s_{m+1}$, (iii) follows from \Cref{assumption} and the fact that $\nabla^2 f(\cdot)$ is $L_2$-Lipschitz,   (iv) follows from  the terminal condition of the algorithm, and (v) follows from \cref{cov_10}.
\end{proof}

 \section{Proofs for SVRC under Sampling with Replacement}

\subsection{Proof of \Cref{With_Replacement_Gradient}}

The idea of the proof is to apply the following matrix Bernstein inequality \cite{Tropp2012} for sampling with replacement  to characterize the sample complexity in order to satisfy the inexactness condition $\norml{  \Hb_k - \nabla^2 F(\x_k) } \leqslant  \alpha \max \{\norml{\s_k}, \epsilon_1\} $ with the probability at least $1 - \zeta$. 

\begin{lemma}[Matrix Bernstein Inequality] \label{bernstern_0}
	Consider a finite sequence $\{\X_k\}$ of independent, random matrices with dimensions $d_1 \times d_2$. Assume that each random matrix satisfies 
	\begin{align*}
	\Ebb \X_k = \mathbf{0} \quad \text{and} \quad \norml{\X_k} \leqslant R  \quad \text{almost surely}.
	\end{align*}
	Define  
	\begin{align}
	\sigma^2 \triangleq \max \left(	\norml{\sum\nolimits_{k} \Ebb(\X_k\X_k^*) }, \norml{\sum\nolimits_{k} \Ebb(\X_k^*\X_k) } \right).
	\end{align}
	Then, for all $\epsilon \geqslant 0$,
	\begin{align*}
	P   \bigg(\norml{   \sum\nolimits_k\X_{k}  }  \geqslant   \epsilon \bigg)  \leqslant     2(d_1 + d_2) \exp \bigg( -  \frac{  \epsilon^2/2 }{  \sigma^2  + R\epsilon/3}\bigg).
	\end{align*}  
\end{lemma}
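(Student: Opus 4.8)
The plan is to prove the bound by the matrix Laplace transform method, the noncommutative analogue of the Chernoff argument, following Ahlswede--Winter and Tropp. First I would reduce the rectangular case to a Hermitian eigenvalue problem through the Hermitian dilation: for each $\X_k \in \RR^{d_1 \times d_2}$ set
\[
\mathcal{H}(\X_k) = \begin{bmatrix} \zero & \X_k \\ \X_k^* & \zero \end{bmatrix} \in \RR^{(d_1+d_2)\times(d_1+d_2)}.
\]
Since $\|\mathcal{H}(\X_k)\| = \|\X_k\|$ and $\lambda_{\max}\big(\mathcal{H}(\sum_k \X_k)\big) = \big\|\sum_k \X_k\big\|$, bounding the spectral norm of the rectangular sum is equivalent to a largest-eigenvalue tail bound for the sum of the Hermitian dilations $\Y_k := \mathcal{H}(\X_k)$. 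This is also where the prefactor dimension $d_1 + d_2$ enters, and a direct computation shows that $\Ebb\,\Y_k^2$ is block diagonal with blocks $\Ebb\,\X_k\X_k^*$ and $\Ebb\,\X_k^*\X_k$, so that $\big\|\sum_k \Ebb\,\Y_k^2\big\| = \sigma^2$.

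Next, for the Hermitian, independent, centered summands $\Y_k$ I would invoke the matrix Laplace transform inequality: for every $\theta > 0$,
\[
\Pbb\Big(\lambda_{\max}\big(\textstyle\sum_k \Y_k\big) \geqslant \epsilon\Big) \leqslant e^{-\theta\epsilon}\,\Ebb\,\Tr\exp\Big(\theta\textstyle\sum_k \Y_k\Big).
\]
The crucial structural step is to replace the trace exponential of the sum by a sum of individual cumulant generating functions. Because the $\Y_k$ do not commute, I cannot simply split the exponential; instead I would apply Lieb's concavity theorem (iterated, conditioning on one summand at a time and using the tower property) to obtain the subadditivity
\[
\Ebb\,\Tr\exp\Big(\theta\textstyle\sum_k \Y_k\Big) \leqslant \Tr\exp\Big(\textstyle\sum_k \log\Ebb\,e^{\theta\Y_k}\Big).
\]

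I would then bound each matrix moment generating function using $\Ebb\,\Y_k = \zero$ and $\|\Y_k\|\leqslant R$ almost surely. Transferring the scalar estimate on $(e^{\theta x}-1-\theta x)/x^2$ to the matrix setting gives, for $0 < \theta < 3/R$, the semidefinite bound $\log\Ebb\,e^{\theta\Y_k} \preccurlyeq \frac{\theta^2/2}{1-\theta R/3}\,\Ebb\,\Y_k^2$. Summing, using monotonicity of $\Tr\exp$ and $\sum_k \Ebb\,\Y_k^2 \preccurlyeq \sigma^2\I$, collapses the right-hand side to $(d_1+d_2)\exp\!\big(\tfrac{\theta^2\sigma^2/2}{1-\theta R/3}\big)$. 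Minimizing the exponent $-\theta\epsilon + \tfrac{\theta^2\sigma^2/2}{1-\theta R/3}$ over $\theta\in(0,3/R)$ at $\theta = \epsilon/(\sigma^2 + R\epsilon/3)$ yields the claimed exponent $-\tfrac{\epsilon^2/2}{\sigma^2 + R\epsilon/3}$, and the leading factor of $2$ arises from applying the one-sided eigenvalue bound to both $\sum_k\Y_k$ and $-\sum_k\Y_k$ to recover the two-sided spectral norm. I expect the main obstacle to be the subadditivity step: it is precisely noncommutativity that rules out a naive Golden--Thompson splitting (which handles only two factors), so the argument must rest on the considerably deeper Lieb concavity theorem.
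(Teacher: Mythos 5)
The paper never proves this lemma at all — it is imported verbatim from Tropp (2012) — and your proposal is a faithful reconstruction of precisely that source's argument (Hermitian dilation to reduce to the self-adjoint case, matrix Laplace transform, Lieb-based subadditivity of matrix cumulants, the moment-generating-function bound $\log\Ebb\,e^{\theta\Y_k} \preccurlyeq \frac{\theta^2/2}{1-\theta R/3}\,\Ebb\,\Y_k^2$, and the choice $\theta = \epsilon/(\sigma^2 + R\epsilon/3)$), so it is correct and takes essentially the same route as the paper's cited proof. One minor remark: since the dilation has a symmetric spectrum, $\lambda_{\max}\big(\mathcal{H}(\textstyle\sum_k \X_k)\big) = \big\|\textstyle\sum_k \X_k\big\|$ already captures the two-sided norm bound, so your doubling step is redundant — Tropp's theorem carries prefactor $(d_1+d_2)$, and your factor $2(d_1+d_2)$ merely reproduces the paper's looser stated constant, which costs nothing.
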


Let   $\xi_{H}(k) $ be the collection of index that uniformly picked from $1, \cdots, N$ with replacement, and  $\X_i$ be
\begin{align*}
\X_i = \frac{1}{|\xi_{H}(k) |} \left(\nabla^2 f_{ i}(\x_k) -  \nabla^2 f_{ i}(\tilde{\x})  +  \nabla^2 F(\tilde{\x}) - \nabla^2 F( \x_{k})\right),
\end{align*}
 then we have
\begin{align} \label{with_replacement_Hessian_sample_size}
\Hb_k - \nabla^2 F(\x_k) = \sum_{i \in    \xi_{H}(k)  } \X_i.
\end{align}

 Moreover, we have $\mathbb{E} \X_i = \mathbf{0}$, and
\begin{align*}
R \triangleq \norml{\X_i} &= \frac{1}{|\xi_{H}(k) |} \norml{\nabla^2 f_{\xi_i}(\x_k) -  \nabla^2 f_{\xi_i}(\tilde{\x})  +  \nabla^2 F(\tilde{\x}) - \nabla^2 F( \x_{k})} \\
&\numleqslant{i}\frac{ 2L_2 }{|\xi_{H}(k) |} \norml{\x_{k} - \tilde{\x}}, \numberthis \label{bernstern_1}
\end{align*}
where (i) follows because $\nabla^2 f_i(\cdot)$ is $L_2$ Lipschitz, for $1 \leqslant i \leqslant N$.

The variance also can be bounded by
  \begin{align*}\label{bernstern_2}
\sigma^2 &\triangleq \max \left(	\norml{\sum\nolimits_{k \in \xi_{H}(k)} \Ebb(\X_k\X_k^*) }, \norml{\sum\nolimits_{k \in \xi_{H}(k)} \Ebb(\X_k^*\X_k) } \right) \\
&\numleqslant{i} 	\norml{\sum\nolimits_{k \in \xi_{H}(k)} \Ebb(\X_k^2 ) } \numleqslant{ii}   \sum\nolimits_{k \in \xi_{H}(k)} \Ebb \norml{\X_k^2}  \leqslant \sum\nolimits_{k \in \xi_{H}(k)}\Ebb \norml{\X_k}^2  \\
& \numleqslant{ii} \frac{ 4L_2^2 }{|\xi_{H}(k) |} \norml{\x_{k} - \tilde{\x}}^2 \numberthis
\end{align*}
where (i) follows from the fact that $\X_k$ is real and symmetric, (ii) follows from   Jensen's inequality, and (iii) follows from \cref{bernstern_1}.

Therefore, in order to satisfy $\norml{   \Hb_k - \nabla^2 F(\x_k)   } \leqslant  \alpha \max \{\norml{\s_k}, \epsilon_1\}$ with probability at least $1 - \zeta$, by \cref{with_replacement_Hessian_sample_size}, it is equivalent to require $\norml{ \sum_{i \in  \xi_{H}(k)} \X_i    } \leqslant  \alpha \max \{\norml{\s_k}, \epsilon_1\}$ with probability at least $1 - \zeta$.  We now apply \Cref{bernstern_0} for $\X_i$,  and it is  sufficient to have:
 \begin{align*}
  		  2(d_1 + d_2) \exp \bigg(   \frac{-  \epsilon^2/2 }{  \sigma^2  + R\epsilon/3 }\bigg) \leqslant \zeta \\
\end{align*}  
which is equivalent to have
   	\begin{align*}
	   	\frac{  1 }{   \sigma^2   + R\epsilon/3 } \geqslant \frac{2}{\epsilon^2} \log \left(\frac{2(d_1 + d_2)}{\zeta}\right). \numberthis \label{with_1}
    \end{align*}
Plugging \cref{bernstern_1,bernstern_2} into \cref{with_1} yields 
   	\begin{align*}
   	 	\frac{  1 }{   \frac{ 4L_2^2 }{|\xi_{H}(k) |} \norml{\x_{k} - \tilde{\x}}^2   + \frac{ 2L_2 }{|\xi_{H}(k) |} \norml{\x_{k} - \tilde{\x}} \epsilon/3 }   \geqslant \frac{2}{\epsilon^2}  \log \left(\frac{ 4d}{\zeta}\right) , 
   	\end{align*}
which gives
    	\begin{align}
    	|\xi_{H}(k) | \geqslant   \left(    \frac{ 8L_2^2 }{\epsilon^2} \norml{\x_{k} - \tilde{\x}}^2   + \frac{ 4L_2 }{3 \epsilon} \norml{\x_{k} - \tilde{\x}}    \right) \log \left(\frac{ 4d}{\zeta}\right) .
    	\end{align}
Substituting $ \epsilon =   \alpha \max \{\norml{\s_k}, \epsilon_1\}$, we obtain the required sample size to be bounded by
\begin{align}
|\xi_{H}(k) | \geqslant   \left(    \frac{ 8L_2^2 }{ \alpha^2 \max \{\norml{\s_k}^2, \epsilon_1^2\}} \norml{\x_{k} - \tilde{\x}}^2   + \frac{ 4L_2 }{3  \alpha \max \{\norml{\s_k}, \epsilon_1\}} \norml{\x_{k} - \tilde{\x}}    \right) \log \left(\frac{ 4d}{\zeta}\right) .
\end{align}

We next bound  the sample size $|\xi_{g}(k)|$ for the gradient in the similar procedure.  We first define $\X_i \in \mathbb{R}^{d  \times 1}$ as
\begin{align}  
\X_i = \frac{1}{|\xi_{g}(k)|} \left(\nabla f_{\xi_i}(\x_k) -  \nabla f_{\xi_i}(\tilde{\x})  +  \nabla F(\tilde{\x}) - \nabla F( \x_{k})\right),
\end{align}
then we have
\begin{align}\label{with_replacement_gradient_norm_bound}
	\g_k - \nabla f(\x_k) = \sum_{i \in  \xi_{g}(k) } \X_i
\end{align}
Furthermore,
\begin{align*} 
R = \norml{\X_i} &=  \frac{1}{|\xi_{g}(k)|} \norml{\nabla f_{\xi_i}(\x_k) -  \nabla f_{\xi_i}(\tilde{\x})  +  \nabla F(\tilde{\x}) - \nabla F( \x_{k})}\numleqslant{i}\frac{2L_1}{|S_{g,k}|} \norml{\x_{k} - \tilde{\x}}, \numberthis
\end{align*}
where (i) follows because $\nabla  f_i(\cdot)$ is $L_1$ Lipschitz, for $i=1,\ldots,N$, and
\begin{align*}
\sigma^2 &\triangleq \max \left(	\norml{\sum\nolimits_{k \in \xi_{g}(k)} \Ebb(\X_k\X_k^*) }, \norml{\sum\nolimits_{k \in \xi_{g}(k)} \Ebb(\X_k^*\X_k) } \right)   \leqslant \sum\nolimits_{k \in \xi_{H}(k)}\Ebb \norml{\X_k}^2  \\
& \numleqslant{ii} \frac{ 4L_1^2 }{|\xi_{g}(k) |} \norml{\x_{k} - \tilde{\x}}^2 
\end{align*}

In order to satisfy $\norml{   \g_k - \nabla  F(\x_k)   } \leqslant \beta \max \left\{ \norml{  \s_{k}}^2 ,\epsilon_1^2  \right\}  $ with the probability at least $1 - \zeta$, by \cref{with_replacement_gradient_norm_bound}, it is equivalent to require $\norml{    \sum_{i \in  \xi_{g}(k)| }\X_i    } \leqslant  \beta \max \left\{ \norml{  \s_{k}}^2 ,\epsilon_1^2  \right\}$ with the probability at least $1 - \zeta$. We then apply \Cref{bernstern_0} for $\X_i$ in the way similar to that for bounding the sample size for Hessian, with $R = \frac{2L_1}{|S_{g,k}|} \norml{\x_{k} - \tilde{\x}}$,  $\epsilon =\beta \max \left\{ \norml{  \s_{k}}^2 ,\epsilon_1^2  \right\}  $, and $\sigma^2 =\frac{ 4L_1^2 }{|\xi_{g}(k) |} \norml{\x_{k} - \tilde{\x}}^2$, and obtain the required sample size to satisfy
\begin{align}
|\xi_{g}(k)| \geqslant \left(    \frac{ 8L_1^2 }{ \beta^2 \max \{\norml{\s_k}^4, \epsilon_1^4\}} \norml{\x_{k} - \tilde{\x}}^2   + \frac{ 4L_1 }{3  \beta \max \{\norml{\s_k}^2, \epsilon_1^2\}} \norml{\x_{k} - \tilde{\x}}    \right) \log \left(\frac{ 2(d+1)}{\zeta}\right) .
\end{align}

\subsection{Proof of \Cref{Total_Hessian_complexity_bound}}

First, by \cref{vector_norm_bound_7}, we have 
	\begin{align} \label{cubic_bound}
	\sum_{i=1}^{k+1} \norml{\x_{i} - \x_{i-1}}^3 \leqslant C.
	\end{align}

We then  derive
	\begin{align*} \label{bound_for_sum_1}
	\sum_{i=0}^{k/m-1}& \sum_{j=1}^{m-1} \norml{\x_{i\cdot m + j } - \x_{i\cdot m}}^2 \\
	& \leqslant 	\sum_{i=0}^{k/m-1} \sum_{j=1}^{m-1} \bigg(   \norml{\x_{i\cdot m  + j} - \x_{i\cdot m+j-1 }}+\cdots + \norml{\x_{i\cdot m  + 1} - \x_{i\cdot m }}\bigg)^2\\
	&\leqslant  	\sum_{i=0}^{k/m-1} \sum_{j=1}^{m-1} \bigg(   \norml{\x_{i\cdot m  + m -1} - \x_{i\cdot m+m-2 }}+\cdots + \norml{\x_{i\cdot m  + 1} - \x_{i\cdot m }}\bigg)^2\\
	&= \sum_{i=0}^{k/m-1} \sum_{j=1}^{m-1} \bigg( \sum_{l = 1}^{m-1} \norml{\x_{i\cdot m + l } - \x_{i\cdot m +l-1}}\bigg)^2 
	\numleqslant{i} \sum_{i=0}^{k/m-1} \sum_{j=1}^{m-1}   m  \sum_{l = 1}^{m-1} \norml{\x_{i\cdot m + l } - \x_{i\cdot m +l-1}}^2\\
	& \numleqslant{ii} m^{2} \sum_{i=0}^{k/m-1}       \sum_{l = 1}^{m-1} \norml{\x_{i\cdot m + l } - \x_{i\cdot m +l-1}}^2    \leqslant m^2 \sum_{i=1}^{k}  \norml{\x_{i} - \x_{i-1}}^2\\
	&\numleqslant{iii} m^2 k^{1/3} \big(\sum_{i=1}^{k}  \norml{\x_{i} - \x_{i-1}}^3\big)^{2/3} 
	\numleqslant{iv} m^2 k^{1/3} \CubicBound^{2/3},	\numberthis
	\end{align*}
where (i) follows from the Cauthy-Schwaz inequality 
(ii) follows because $j$ is not a variable in the inner summation, (iii) follows from Holder's inequality, and (iv) follows from \cref{cubic_bound}.

Similarly, we have that
\begin{align*}
	\sum_{i=0}^{k/m-1}  \sum_{j=1}^{m-1} \norml{\x_{i\cdot m + j } - \x_{i\cdot m}} 	& \leqslant 	\sum_{i=0}^{k/m-1} \sum_{j=1}^{m-1} \bigg(   \norml{\x_{i\cdot m  + j} - \x_{i\cdot m+j-1 }}+\cdots + \norml{\x_{i\cdot m  + 1} - \x_{i\cdot m }}\bigg) \\
	&\leqslant  	\sum_{i=0}^{k/m-1} \sum_{j=1}^{m-1} \bigg(   \norml{\x_{i\cdot m  + m -1} - \x_{i\cdot m+m-2 }}+\cdots + \norml{\x_{i\cdot m  + 1} - \x_{i\cdot m }}\bigg) \\
	&= \sum_{i=0}^{k/m-1} \sum_{j=1}^{m-1} \bigg( \sum_{l = 1}^{m-1} \norml{\x_{i\cdot m + l } - \x_{i\cdot m +l-1}}\bigg)  
	\numleqslant{i} m \sum_{i=0}^{k/m-1}     \sum_{l = 1}^{m-1} \norml{\x_{i\cdot m + l } - \x_{i\cdot m +l-1}} \\
	&\leqslant m \sum_{i=1}^{k}  \norml{\x_{i} - \x_{i-1}} \numleqslant{ii} m  k^{2/3} \bigg(\sum_{i=1}^{k}  \norml{\x_{i} - \x_{i-1}}^3\bigg)^{1/3} 
	\numleqslant{iii} m  k^{2/3} \CubicBound^{1/3},	\numberthis \label{with_first_order}
\end{align*}
where (i) follows because $j$ is not a variable in the inner summation, (ii) follows from Holder's inequality, and (iii) follows from \cref{cubic_bound}.
	
Thus, the total sample size for Hessian is given by
	\begin{align*}
	m + \frac{kN}{m}& +  \sum_{i=0}^{k/m-1}  \sum_{j=1}^{m-1} |\xi_{H}(k) | \\
	 &\numleqslant{i}   \frac{CkN}{m} +   \sum_{i=0}^{k/m-1} \sum_{j=1}^{m-1}  \left(    \frac{ 8L_2^2 }{ \alpha^2 \max \{\norml{\s_k}^2, \epsilon_1^2\}}  \norml{\x_{i\cdot m + j } - \x_{i\cdot m}}^2    + \frac{ 4L_2 }{3  \alpha \max \{\norml{\s_k}, \epsilon_1\}}  \norml{\x_{i\cdot m + j } - \x_{i\cdot m}}      \right) \log \left(\frac{ 4d}{\zeta}\right)  \\
	& \leqslant  	\frac{CkN}{m} +   \sum_{i=0}^{k/m-1} \sum_{j=1}^{m-1}  \left(    \frac{ 8L_2^2 }{ \alpha^2  \epsilon_1^2 }  \norml{\x_{i\cdot m + j } - \x_{i\cdot m}}^2   + \frac{ 4L_2 }{3  \alpha  \epsilon_1 }  \norml{\x_{i\cdot m + j } - \x_{i\cdot m}}    \right) \log \left(\frac{ 4d}{\zeta}\right)  \\
	&\numleqslant{ii} \frac{CkN}{m} +     \left( \frac{ 8L_2^2 }{ \alpha^2  \epsilon_1^2 }  m^2 k^{1/3} \CubicBound^{2/3}   + \frac{ 4L_2 }{3  \alpha  \epsilon_1 } m  k^{2/3} \CubicBound^{1/3}  \right)\log \left(\frac{ 4d}{\zeta}\right)   \\
	&\numleqslant{iii} \log \left(\frac{ 4d}{\zeta}\right)  \left( \frac{ N}{m\epsilon^{3/2}} +      \frac{  C }{    \epsilon^{3/2} }  m^2      + \frac{C }{  \epsilon^{3/2} } m   \right) =  \log \left(\frac{ 4d}{\zeta}\right)   \frac{C}{\epsilon^{3/2}} \left(  \frac{ N}{m } +   m^2     \right)
	\end{align*}
where (i) follows form  \Cref{With_Replacement_Gradient}, and (ii) follows form \cref{bound_for_sum_1,with_first_order},
(iii) follows from the fact that  $\zeta \leqslant 1$ and $d \geqslant 1$ which gives $\log \left(\frac{ 4d}{\zeta}\right) > 1$,  and  $\epsilon_1 = O(\epsilon^{1/2})$ such that $ k = O(\epsilon^{-3/2})$ according to \Cref{convergence_thm}

We minimize the above bound over $m$, substitute the minimizer $m^\star = N^{1/3}$, and obtain
	\begin{align*}
   \sum_{i=0}^{k} |\xi_{H}(k)|\leqslant   \frac{ C N^{2/3} }{\epsilon^{ 3/2}}\log \left(\frac{ 4d}{\zeta}\right)  .
 \end{align*}

 Next, according to \Cref{With_Replacement_Gradient}, \Cref{assumption} is satisfies with probability at least $1 - \zeta$ for gradient and $1 - \zeta$ for Hessian . Thus, according to the union bound, the probability of a failure satisfaction per iteration is at most $2 \zeta$. Then, for $k$ iteration, the probability of failure satisfaction of \Cref{assumption} is at most $2 k \zeta$ according to the union bound. To obtain  \Cref{assumption} holds for the total $k$ iteration with probability least $1 - \delta$, we require
\begin{align*}
 1 - 2k\zeta \geqslant 1 - \delta,
\end{align*}
which yields
\begin{align*}
\zeta \leqslant  \frac{\delta}{2k}.
\end{align*} 
Thus, with probability $1 - \delta$, the algorithms successfully outputs an $\epsilon$ approximated second-order stationary point, with the total Hessian sample complexity is bounded by
\begin{align}
	 \sum_{i=0}^{k} |\xi_{H}(k)|\leqslant   \frac{ C N^{2/3} }{\epsilon^{ 3/2}}\log \left(\frac{ 8d}{\epsilon^{3/2} \delta  }\right)  \leqslant   \frac{ C N^{2/3} }{\epsilon^{ 3/2}}\log \left(\frac{ 8d}{\epsilon  \delta  }\right) .
\end{align} which gives
\begin{align}
	 \sum_{i=0}^{k} |\xi_{H}(k)| = \tilde{O} \left(\frac{  N^{2/3} }{\epsilon^{ 3/2}}\right).
\end{align}


\section{Proof of Concentration Inequality for Sampling without replacement }
The proof generalizes the Hoeffding-Serfling inequality for scalar random variables in \cite{bardenet2015} to that for random matrices. We also apply various properties for handling random matrices in \cite{Tropp2012}.

\subsection{Definitions and Useful Lemmas}

We first introduce the definition of the matrix function following  \cite{Tropp2012}, and then introduce a number of Lemmas that are useful in the proof.

Given a symmetric matrix $\A$, suppose its eigenvalue decomposition is given by  $ \A = \U \bLambda \U^T \in \mathbb{R}^{d \times d}$, where $\bLambda  = diag(\lambda_1, \cdots, \lambda_d)$. Then a function $f: \mathbb{R}  \rightarrow \mathbb{R} $ of $\A$ is defined as:
\begin{align}
	f(\A) \triangleq \U f(\bLambda ) \U^T ,
\end{align} 
 where $f(\bLambda ) = diag(f(\lambda_1), \cdots, f(\lambda_d))$, i.e., $f(\bLambda )$ applies the function $f(\cdot)$ to each diagonal entry of the matrix $\bLambda $. 
  
The trace exponential function $\mathrm{tr} \ \exp : \A \rightarrow \mathrm{tre}^\A$, i.e., $\tre (\A)$, is defined to first apply the exponential matrix function $\exp(\A)$, and then take the trace of $\exp(\A)$. Such a function is monotone with respect to the semidefinite order:
 \begin{align} \label{trace_exp_monotone}
 	\A \preccurlyeq  \mathbf{H}  \quad \Longrightarrow \quad \tre(\A) \preccurlyeq \tre \mathbf{(H)},
 \end{align}
 which follows because for two symmetric matrices $\A$ and $\mathbf{ H}$, if $\A \preccurlyeq \mathbf{H}$, then $\lambda_i(\A) \leqslant \lambda_i \mathbf{ (H) }$ for every $i$, where $\lambda_i(\A)$ is the $i$-th largest eigenvalue of $\A$. Furthermore, the matrix function  $\log(\cdot)$ is monotone with respect to the semidefinite order (see the exercise 4.2.5 in \cite{Bhatia07}):
 \begin{align} \label{log_monotone}
 	 \mathbf{ 0} \prec \A \preccurlyeq \mathbf{ H} \quad  \Longrightarrow   \quad \log(\A) \preccurlyeq \log(\mathbf{ H}).
 \end{align}
 
The next three lemmas follow directly from \cite{bardenet2015} because the proofs are applicable for matrices. 
\begin{lemma} \label{HS_lemma_1_1}[\cite{bardenet2015}]
Let $\Z_k \triangleq \frac{1}{k} \sum_{i = 1}^{k} \X_i$. The following reverse martingale structure holds for $\{\Z_k\}_{k \leqslant N}$:
	\begin{align}
		\mathbb{E} [\Z_k| \Z_{k+1}, \cdots \Z_{N-1}] = \Z_{k+1}.
	\end{align}
\end{lemma}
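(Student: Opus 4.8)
The plan is to exploit the fact that sampling without replacement makes $\X_1, \ldots, \X_N$ a uniformly random permutation of the fixed collection $\{\A_1, \ldots, \A_N\}$, so that this sequence is exchangeable. First I would record the deterministic identity relating consecutive averages: since $(k+1)\Z_{k+1} = k\Z_k + \X_{k+1}$, one has $\Z_k = \frac{1}{k}\big[(k+1)\Z_{k+1} - \X_{k+1}\big]$. Because $\Z_{k+1}$ is measurable with respect to the $\sigma$-field generated by $\Z_{k+1}, \ldots, \Z_{N-1}$, computing $\Ebb[\Z_k \mid \Z_{k+1}, \ldots, \Z_{N-1}]$ reduces to computing $\Ebb[\X_{k+1} \mid \Z_{k+1}, \ldots, \Z_{N-1}]$.

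The key step is to identify what the conditioning actually reveals. Note first that $\Z_N = \frac{1}{N}\sum_{i=1}^N \A_i = \mu$ is deterministic, so it may be freely adjoined to the conditioning without changing any conditional expectation. From $\Z_N, \Z_{N-1}, \ldots, \Z_{k+1}$ one can solve backwards for the individual samples via $\X_{j} = j\Z_j - (j-1)\Z_{j-1}$; hence the conditioning determines the ordered tuple $(\X_{k+2}, \ldots, \X_N)$ exactly, and therefore also pins down the unordered set $S := \{\A_1, \ldots, \A_N\} \setminus \{\X_{k+2}, \ldots, \X_N\}$ of size $k+1$ that comprises $\{\X_1, \ldots, \X_{k+1}\}$. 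What remains unknown is only the ordering of these first $k+1$ samples.

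I would then invoke exchangeability: conditioned on the realized values of $\X_{k+2}, \ldots, \X_N$, the tuple $(\X_1, \ldots, \X_{k+1})$ is a uniformly random ordering of $S$, so each coordinate — in particular $\X_{k+1}$ — is uniform over $S$. Consequently $\Ebb[\X_{k+1} \mid \Z_{k+1}, \ldots, \Z_{N-1}] = \frac{1}{k+1}\sum_{\A \in S}\A = \frac{1}{k+1}\sum_{i=1}^{k+1}\X_i = \Z_{k+1}$, where the middle equality holds because the elements of $S$ are exactly $\X_1, \ldots, \X_{k+1}$. Substituting into the identity above gives $\Ebb[\Z_k \mid \Z_{k+1}, \ldots, \Z_{N-1}] = \frac{1}{k}[(k+1)\Z_{k+1} - \Z_{k+1}] = \Z_{k+1}$, as claimed.

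The main obstacle I anticipate is the measure-theoretic bookkeeping in the key step: one must argue carefully that the $\sigma$-field generated by $(\Z_{k+1}, \ldots, \Z_{N-1})$ coincides with the one generated by the ordered tail $(\X_{k+2}, \ldots, \X_N)$ together with the (redundant) set-valued information about $\{\X_1, \ldots, \X_{k+1}\}$, and that exchangeability may legitimately be applied after this conditioning. Since the underlying sample space is finite, this can be handled by elementary counting of equally likely permutations rather than general conditional-expectation machinery, which is precisely why the lemma is stated as following directly from the scalar argument in \cite{bardenet2015}.
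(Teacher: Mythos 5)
Your proof is correct and takes essentially the same route as the paper, which does not reprove this lemma but defers to the scalar argument of \cite{bardenet2015} with the remark that it applies verbatim to matrices; your telescoping identity plus exchangeability argument, together with the observation that $\sigma(\Z_{k+1},\ldots,\Z_{N-1})=\sigma(\X_{k+2},\ldots,\X_N)$ because $\Z_N=\mu$ is deterministic, is precisely that argument, and nothing in it is scalar-specific since the conditional expectation acts entrywise on the random matrices.
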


\begin{lemma}\label{HS_lemma_1_2}[\cite{bardenet2015}]
Let $\Y_k \triangleq \Z_{N-k}$  for $1 \leqslant k \leqslant N-1$. For any $\lambda > 0 $, the following equality holds for $2 \leqslant k \leqslant n$,
	\begin{align}
		\lambda \Y_k = \lambda \Y_{k-1} - \lambda \frac{\X_{N-k+1} - \mu - \Y_{k-1}}{N-k}.
	\end{align}
\end{lemma}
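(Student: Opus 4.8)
The plan is to treat this as a purely algebraic, pathwise identity among the matrix-valued partial averages $\{\Z_k\}$ — it carries no probabilistic content — and to verify it by direct substitution, observing that every operation involved is linear in the matrices, so that the scalar derivation of \cite{bardenet2015} transfers verbatim. First I would set up partial-sum notation, writing $\mathbf{S}_j := \sum_{i=1}^{j}(\X_i - \mu)$ so that $\Z_j = \mathbf{S}_j / j$. The appearance of $\mu$ on the right-hand side of the stated recursion signals that the \emph{centered} running mean $\Z_j = \frac{1}{j}\sum_{i=1}^{j}(\X_i-\mu)$ is the version intended here; with this convention the $\mu$ term emerges exactly, whereas the uncentered definition literally written in \Cref{HS_lemma_1_1} would produce the same recursion but without the $\mu$. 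Under the index reversal $\Y_k = \Z_{N-k}$, this gives $\Y_k = \mathbf{S}_{N-k}/(N-k)$ and $\Y_{k-1} = \Z_{N-k+1} = \mathbf{S}_{N-k+1}/(N-k+1)$.

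The core step is the one-term telescoping relation $\mathbf{S}_{N-k} = \mathbf{S}_{N-k+1} - (\X_{N-k+1}-\mu)$, which is legitimate because the range $2 \leqslant k \leqslant n < N$ keeps every index in bounds and forces $N-k \geqslant 1$. Substituting $\mathbf{S}_{N-k+1} = (N-k+1)\Y_{k-1}$ and dividing by $N-k$, I would obtain
\begin{align*}
\Y_k = \frac{(N-k+1)\Y_{k-1} - (\X_{N-k+1}-\mu)}{N-k},
\end{align*}
and rewriting the numerator as $(N-k)\Y_{k-1} + \Y_{k-1} - (\X_{N-k+1}-\mu)$ yields precisely $\Y_k = \Y_{k-1} - \frac{\X_{N-k+1}-\mu-\Y_{k-1}}{N-k}$. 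Multiplying both sides by the scalar $\lambda > 0$ then gives the stated identity.

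Because this derivation uses only sums of matrices and multiplications by the scalars $1/(N-k)$, $1/(N-k+1)$, and $\lambda$ — and never a product of two of the $\X_i$ — matrix noncommutativity and the ordering of factors never enter; the relation holds entrywise and hence as a matrix identity, exactly mirroring the scalar case of \cite{bardenet2015}. I therefore do not expect any genuine mathematical obstacle. The only points demanding care are bookkeeping ones: keeping the index reversal $\Y_k = \Z_{N-k}$ and the admissible range $2\leqslant k\leqslant n$ straight, and tracking the centering by $\mu$ so that the constant term on the right-hand side is reproduced correctly. This recursion is then precisely the building block that, together with the reverse-martingale structure of \Cref{HS_lemma_1_1}, will drive the trace-exponential argument for the matrix Hoeffding-Serfling bound in \Cref{Matrix_Hoeffding_Serfling_Inequality}.
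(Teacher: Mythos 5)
Your proof is correct and coincides with the paper's (implicit) approach: the paper gives no proof of this lemma at all, asserting only that the scalar argument of \cite{bardenet2015} "follows directly" for matrices because every operation involved is linear, and your telescoping computation $\mathbf{S}_{N-k} = \mathbf{S}_{N-k+1} - (\X_{N-k+1}-\mu)$ followed by division is exactly that transfer made explicit. Your observation about centering is also a genuine catch: the recursion as stated (with the $\mu$ term) requires the centered convention $\Z_j = \frac{1}{j}\sum_{i=1}^{j}(\X_i - \mu)$, which the paper itself silently adopts later in the proof of \Cref{Matrix_Hoeffding_Serfling_Inequality} (when it writes $\Y_1 = \Z_{N-1} = \frac{1}{N-1}\sum_{i=1}^{N-1}(\X_i-\mu)$), despite the uncentered definition given in \Cref{HS_lemma_1_1}.
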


\begin{lemma}\label{HS_lemma_1_3}[\cite{bardenet2015}]
Let $\Y_k \triangleq \Z_{N-k}$ for $1 \leqslant k \leqslant N-1$. For $2 \leqslant k \leqslant N$, the following equality holds
	\begin{align}
		\mathbb{E} [\X_{N-k+1} - \mu - \Y_{k-1} | Y_1, \cdots, \Y_{k-1}] = 0,
	\end{align}
	where $\mu = \frac{1}{N} \sum_{t=1}^{N} \X_t$.
\end{lemma}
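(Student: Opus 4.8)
The plan is to prove \Cref{HS_lemma_1_3} directly from the exchangeability of sampling without replacement, which is exactly why the statement transfers verbatim from the scalar case of \cite{bardenet2015}: the only probabilistic input is linearity of conditional expectation, applied entrywise to the $d_1\times d_2$ matrices, together with the symmetry of a uniform random permutation over index \emph{positions}. No matrix-specific machinery (trace-exponential, Lieb concavity, Hermitian dilation, etc.) is needed here; those enter only later in the proof of \Cref{Matrix_Hoeffding_Serfling_Inequality}.

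First I would reformulate the conditioning. Writing $S_m \triangleq \sum_{i=1}^{m}\X_i$, the $\sigma$-algebra $\mathcal{F}_{k-1} \triangleq \sigma(\Y_1,\dots,\Y_{k-1}) = \sigma(\Z_{N-1},\dots,\Z_{N-k+1})$ is generated by the partial sums $S_{N-1},\dots,S_{N-k+1}$, since each $\Z_m$ determines $S_m$ and conversely. Because $S_N = N\mu$ is deterministic, the consecutive differences $\X_{m+1}=S_{m+1}-S_m$ show that $\mathcal{F}_{k-1}$ reveals precisely the last $k-1$ draws $\X_{N-k+2},\dots,\X_N$, while leaving only the \emph{sum} $S_{N-k+1}$ of the first $N-k+1$ draws known. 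This identification of what $\mathcal{F}_{k-1}$ does and does not reveal is the step that needs the most care.

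The substantive fact is then exchangeability: conditioned on the values occupying the last $k-1$ positions, the first $N-k+1$ draws are a uniformly random ordering of the $N-k+1$ elements of $\Xc$ not appearing among $\X_{N-k+2},\dots,\X_N$. Consequently all of positions $1,\dots,N-k+1$ share the same conditional law, so position $N-k+1$ has conditional mean equal to the average of those unrevealed elements:
\begin{align*}
\Ebb[\X_{N-k+1}\mid \mathcal{F}_{k-1}] = \frac{1}{N-k+1}\sum_{i=1}^{N-k+1}\X_i = \frac{S_{N-k+1}}{N-k+1}.
\end{align*}
Finally I would match this to the claimed expression: under the definition of $\Z_m$ consistent with \Cref{HS_lemma_1_2}, one has $\mu+\Y_{k-1}=\mu+\Z_{N-k+1}=S_{N-k+1}/(N-k+1)$, whence $\Ebb[\X_{N-k+1}\mid\mathcal{F}_{k-1}]=\mu+\Y_{k-1}$. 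Since $\mu$ is constant and $\Y_{k-1}$ is $\mathcal{F}_{k-1}$-measurable, subtracting them gives $\Ebb[\X_{N-k+1}-\mu-\Y_{k-1}\mid\mathcal{F}_{k-1}]=0$, as required. This argument covers the full range $2\leqslant k\leqslant N$ uniformly, including the endpoint $k=N$, where the unrevealed multiset is a single element and the identity is trivial.

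I should also record the shortcut available from the earlier lemmas: dividing the deterministic identity of \Cref{HS_lemma_1_2} by $\lambda$ and taking $\Ebb[\,\cdot\mid\mathcal{F}_{k-1}]$, the reverse-martingale property $\Ebb[\Y_k\mid\mathcal{F}_{k-1}]=\Y_{k-1}$ of \Cref{HS_lemma_1_1} collapses the left-hand side to $\Y_{k-1}$, forcing $\Ebb[\X_{N-k+1}-\mu-\Y_{k-1}\mid\mathcal{F}_{k-1}]=0$ for $2\leqslant k\leqslant N-1$ (there the factor $N-k$ is nonzero). The main obstacle is administrative rather than mathematical: correctly identifying the content of $\mathcal{F}_{k-1}$ and tracking the centering so that $\mu$ enters in the right place. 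Once the conditional-permutation claim is in place, the identity is immediate.
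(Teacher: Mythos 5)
Your proof is correct, and it is essentially the argument the paper relies on: the paper gives no proof of this lemma at all, asserting only that the scalar proof of \cite{bardenet2015} (exchangeability of sampling without replacement plus linearity of conditional expectation, applied entrywise) carries over to matrices, and that is precisely the argument you spell out, including the identification of $\sigma(\Y_1,\dots,\Y_{k-1})$ with the last $k-1$ draws and the trivial endpoint $k=N$. Your writeup also correctly adopts the centered definition $\Z_m=\frac{1}{m}\sum_{i=1}^{m}(\X_i-\mu)$ forced by \Cref{HS_lemma_1_2} and the main theorem's proof, quietly resolving the paper's own notational inconsistency in the statement of \Cref{HS_lemma_1_1}.
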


The following lemma is an extension of Hoeffding's inequality for scalars to matrices. We include a brief proof for completeness.
\begin{lemma}[\textbf{Hoeffding's Inequality for Matrix}] \label{HS_lemma_1_4}
For a random symmetric matrix $\X \in \RR^{d \times d}$, suppose 
  \begin{align*}
  	\mathbb{E} [\X] = 0 \quad \text{   and   }  \quad a\I \preccurlyeq \X \preccurlyeq b \I.
  \end{align*}
where $a$ and $b$ are real constants. Then for any $\lambda > 0$, the following inequality holds
  \begin{align}
   \mathbb{E} [e^  {\lambda \X} ] \preccurlyeq \exp \bigg(\frac{1}{8} \lambda^2 (b - a) ^2 \I  \bigg) .
  \end{align}
\end{lemma}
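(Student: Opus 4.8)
The plan is to mirror the classical scalar proof of Hoeffding's lemma, replacing the pointwise convexity step by a semidefinite inequality that follows from the spectral definition of matrix functions introduced above. First I would establish the matrix ``chord above the graph'' bound
\[
e^{\lambda \X} \preccurlyeq \frac{b\I - \X}{b - a}\, e^{\lambda a} + \frac{\X - a\I}{b - a}\, e^{\lambda b}.
\]
The justification is as follows: consider the scalar function $g(x) = e^{\lambda x} - \big[\tfrac{b-x}{b-a}e^{\lambda a} + \tfrac{x-a}{b-a}e^{\lambda b}\big]$, the exponential minus its affine interpolant at the endpoints $a,b$. Convexity of $x \mapsto e^{\lambda x}$ gives $g(x) \leqslant 0$ for every $x \in [a,b]$. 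Since $a\I \preccurlyeq \X \preccurlyeq b\I$ forces every eigenvalue of $\X$ into $[a,b]$, applying the matrix function $g$ (via the eigendecomposition $\X = \U\bLambda\U^\top$) produces a matrix whose eigenvalues are all $\leqslant 0$, hence $g(\X) \preccurlyeq \zero$. The point that makes this work is that the affine interpolant, evaluated as a matrix function, coincides with the genuinely affine matrix expression on the right: an affine scalar map $c_0 + c_1 x$ applied spectrally equals $c_0 \I + c_1 \X$. Rearranging $g(\X) \preccurlyeq \zero$ yields the displayed inequality.

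Next I would take expectations of both sides. Because the right-hand side is affine in $\X$, expectation passes through it, and the hypothesis $\mathbb{E}[\X] = \zero$ collapses it to a scalar multiple of the identity:
\[
\mathbb{E}\big[e^{\lambda \X}\big] \preccurlyeq \Big(\tfrac{b}{b-a}e^{\lambda a} - \tfrac{a}{b-a}e^{\lambda b}\Big)\I.
\]
Note that $\mathbb{E}[\X]=\zero$ also forces $a \leqslant 0 \leqslant b$, so the two coefficients are nonnegative. It then remains to bound the scalar factor $c := \tfrac{b}{b-a}e^{\lambda a} - \tfrac{a}{b-a}e^{\lambda b}$ by $\exp\!\big(\tfrac18\lambda^2(b-a)^2\big)$, since $\exp\!\big(\tfrac18\lambda^2(b-a)^2\,\I\big) = \exp\!\big(\tfrac18\lambda^2(b-a)^2\big)\I$.

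This reduces the problem to the purely scalar Hoeffding estimate. Writing $p := -a/(b-a) \in [0,1]$ and $u := \lambda(b-a)$, I would factor $c = e^{-pu}\big(1 - p + p e^{u}\big)$ and set $\phi(u) := \log c = -pu + \log(1 - p + pe^{u})$. A direct computation gives $\phi(0) = 0$ and $\phi'(0) = 0$, while $\phi''(u) = q(1-q) \leqslant \tfrac14$, where $q := pe^{u}/(1 - p + pe^{u}) \in [0,1]$. Taylor's theorem with Lagrange remainder then yields $\phi(u) \leqslant u^2/8$, i.e.\ $c \leqslant e^{\lambda^2(b-a)^2/8}$, which completes the argument.

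I expect the main obstacle to be the first step: justifying the transfer from the scalar pointwise inequality $g(x)\leqslant 0$ to the semidefinite inequality $g(\X)\preccurlyeq\zero$. One cannot invoke operator convexity of the exponential, which fails; the care lies in recognizing that the comparison function is an \emph{affine} interpolant, for which the spectral definition of matrix functions makes the transfer automatic once all eigenvalues lie in $[a,b]$. The remaining steps, namely linearity of expectation and the one-variable calculus bound $\phi'' \leqslant 1/4$, are routine.
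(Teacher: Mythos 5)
Your proof is correct and follows essentially the same route as the paper's: both transfer the scalar chord bound for $e^{\lambda x}$ on $[a,b]$ to the matrix level via the spectral decomposition, take expectations using $\mathbb{E}[\X]=\zero$ to collapse the affine right-hand side to a scalar multiple of $\I$, and finish with the scalar Hoeffding estimate. The only differences are cosmetic: the paper performs the transfer by summing the rank-one inequalities $e^{\lambda \lambda_i}\ub_i\ub_i^T \preccurlyeq \bigl(\tfrac{b-\lambda_i}{b-a}e^{\lambda a} + \tfrac{\lambda_i-a}{b-a}e^{\lambda b}\bigr)\ub_i\ub_i^T$ explicitly rather than stating the functional-calculus rule for affine interpolants, and it cites the final scalar bound as ``standard steps'' where you derive it via $\phi'' \leqslant 1/4$.
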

\begin{proof}
The proof follows from the standard reasoning for scalar version. We emphasize only the difference in handling matrices. Suppose the eigenvalue decomposition of the symmetric random matrix  $\X$ can be written as $\X = \U \Lambda \U^T$, where $\U = [\mathbf{u}_1, \cdots, \mathbf{u}_d]$ and $\mathbf{\Lambda} = diag(\lambda_1, \cdots, \lambda_d)$. Therefore, we obtain $e^  {\lambda \X} = \sum_{i = 1}^{ d } e^{\lambda \lambda_i} \ub_i\ub_i^T$. 
	
	Since scalar function $e^{\lambda x}$ is convex for any $\lambda > 0$, for $ 1 \leqslant i \leqslant d$, we have  
	\begin{align}
	   e^{\lambda \lambda_i} \leqslant \bigg( \frac{ b - \lambda_i }{b -a} e^{\lambda a} + \frac{   \lambda_i - a }{b -a} e^{\lambda b} \bigg),
	\end{align} which  implies that
	\begin{align} \label{exp_convex}
		e^{\lambda \lambda_i} \ub_i \ub_i^T\preccurlyeq \bigg( \frac{ b - \lambda_i }{b -a} e^{\lambda a} + \frac{   \lambda_i - a }{b -a} e^{\lambda b} \bigg) \ub_i \ub_i^T.
	\end{align}

Then,
\begin{align*}
	\mathbb{E} [e^{\lambda \X}] &=\mathbb{E}  \bigg[\sum_{i = 1}^{ d } e^{\lambda \lambda_i} \ub_i\ub_i^T\bigg] 
	\overset{\text{(i)}} \preccurlyeq \mathbb{E}\bigg[\sum_{i = 1}^{ d }\bigg( \frac{ b - \lambda_i }{b -a} e^{\lambda a} + \frac{   \lambda_i - a }{b -a} e^{\lambda b} \bigg) \ub_i \ub_i^T\bigg] \\
	& = \mathbb{E}\bigg[	\sum_{i = 1}^{ d } \frac{ b }{b -a} e^{\lambda a} \ub_i \ub_i^T
	- \sum_{i = 1}^{ d } \frac{ \lambda_i }{b -a} e^{\lambda a} \ub_i \ub_i^T + \sum_{i = 1}^{ d } \frac{ \lambda_i }{b -a} e^{\lambda b} \ub_i \ub_i^T - \sum_{i = 1}^{ d } \frac{ a }{b -a} e^{\lambda b} \ub_i \ub_i^T \bigg]\\
	& \numequ{ii} \mathbb{E}\bigg[ 	\sum_{i = 1}^{ d } \frac{ b }{b -a} e^{\lambda a} \ub_i \ub_i^T -     \frac{ e^{\lambda a} }{b -a} \X  +  \frac{ e^{\lambda b} }{b -a} \X  - \sum_{i = 1}^{ d } \frac{ a }{b -a} e^{\lambda b} \ub_i \ub_i^T \bigg] \\
	& \numequ{iii} \mathbb{E}\bigg[ 	\sum_{i = 1}^{ d } \frac{ b }{b -a} e^{\lambda a} \ub_i \ub_i^T  - \sum_{i = 1}^{ d } \frac{ a }{b -a} e^{\lambda b} \ub_i \ub_i^T  \bigg] \\
	& \numequ{iv} \mathbb{E}\bigg[  \frac{ b }{b -a} e^{\lambda a} \I   -  \frac{ a }{b -a} e^{\lambda b} \I \bigg] 
	=   \bigg(\frac{ b }{b -a} e^{\lambda a}   -  \frac{ a }{b -a} e^{\lambda b}\bigg)    \I  \\
	 & \preccurlyeq  \exp \bigg(\frac{1}{8} \lambda^2 (b-a)^2 \bigg) \I 
	 \numequ{v}\exp \bigg(\frac{1}{8} \lambda^2 (b-a)^2 \I\bigg),
	\numberthis
\end{align*}
where (i) follows from \cref{exp_convex} and the fact that  the expectation of random matrix preserves the semi-definite order,  (ii) follows from $\X = \sum_{i=1}^{d} \lambda_i \ub_i \ub_i^T$, (iii) follows because $\mathbb{E} [\X] = 0$, (iv) follows because $\I = \U\U^T = \sum_{i = 1}^{d} \ub_i\ub_i^T $, and (v) follows from the standard steps in the proof of the scalar version of Hoeffding's inequality.
\end{proof}
  
\begin{lemma} \label{HS_lemma_1_5}\cite{Tropp2012}[Corollary 3.3]
	 Let $\mathbf{H}$ be a fixed self-adjoint matrix, and let $\X$ be a random self-adjoint matrix. The following inequality holds
	 \begin{align}
	 	\mathbb{E} \ \mathrm{tr}  \exp (\mathbf{H} + \X) \leqslant \mathrm{tr} \exp (\mathbf{H} + \log (\mathbb{E}e^\X)).
	 \end{align}
\end{lemma}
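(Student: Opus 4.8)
The plan is to derive the inequality from Lieb's concavity theorem together with Jensen's inequality, which is the standard route in \cite{Tropp2012}. The key structural fact I would invoke is Lieb's theorem: for a fixed self-adjoint matrix $\mathbf{H}$, the map
\begin{align*}
\A \longmapsto \tre(\mathbf{H} + \log \A)
\end{align*}
is concave on the cone of positive-definite matrices. This is the only deep ingredient, and I would cite it as a known result (it is restated as Theorem~3.2 in \cite{Tropp2012}) rather than reprove it, since its proof rests on operator-convexity machinery well outside the present scope. Note that the monotonicity of $\tre(\cdot)$ and of $\log(\cdot)$ recorded in \cref{trace_exp_monotone,log_monotone} above is not by itself enough; concavity of the composite trace functional is what the argument genuinely needs.

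Next I would set $\A \triangleq e^{\X}$. Because $\X$ is self-adjoint, its matrix exponential has all eigenvalues $e^{\lambda_i} > 0$ under the spectral definition of matrix functions given earlier, so $\A$ is a random element of the positive-definite cone and $\log \A = \X$. Applying Jensen's inequality to the scalar-valued concave function $g(\A) \triangleq \tre(\mathbf{H} + \log \A)$ over the convex positive-definite domain then gives
\begin{align*}
\mathbb{E}\, \tre(\mathbf{H} + \log \A) \leqslant \tre\big(\mathbf{H} + \log (\mathbb{E}\, \A)\big).
\end{align*}
Substituting $\A = e^{\X}$ and using $\log e^{\X} = \X$ on the left-hand side yields exactly the claimed bound
\begin{align*}
\mathbb{E}\, \tre(\mathbf{H} + \X) \leqslant \tre\big(\mathbf{H} + \log (\mathbb{E}\, e^{\X})\big).
\end{align*}

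The main obstacle is Lieb's concavity theorem itself; once it is available the remainder is essentially a one-line Jensen argument. The only secondary point I would take care to justify is the applicability of Jensen in the matrix setting: this is legitimate precisely because $g$ is scalar-valued and concave in the ordinary sense on the convex set of positive-definite matrices, and because the matrix expectation $\mathbb{E}\,\A = \mathbb{E}\,e^{\X}$ again lies in that convex set (an average of positive-definite matrices remains positive definite). I would also remark that, since the lemma is stated as \cite{Tropp2012}[Corollary~3.3], an alternative and wholly sufficient route is simply to invoke the cited result directly; the Lieb-plus-Jensen sketch above is the self-contained derivation behind that citation.
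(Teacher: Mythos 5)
Your proposal is correct, and it matches the paper's treatment of this lemma: the paper supplies no proof of its own, importing the statement verbatim as Corollary~3.3 of \cite{Tropp2012}, and your Lieb-plus-Jensen argument (concavity of $\A \mapsto \tre(\Hb + \log \A)$ on the positive-definite cone, then Jensen applied to $\A = e^{\X}$) is precisely the derivation by which that corollary is proved in the cited source. Your side remark is also apt: the monotonicity facts in \cref{trace_exp_monotone,log_monotone} would not suffice here, since Jensen genuinely requires concavity of the composite trace functional, not just order-preservation; but given that the paper treats the lemma as a black-box citation, invoking the reference directly, as you note, is equally adequate.
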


\begin{lemma} \label{HS_square_sum_inequality}\cite{bardenet2015}
	For integer $n \leqslant N$, the following inequality holds
	\begin{align*}
		  \sum_{t=1}^{n}  \big(\frac{ 1 }{N-t}  \big)^2 \leqslant \frac{n}{(N-n)^2} \big(1 - \frac{n-1}{N}\big) 
	\end{align*}
\end{lemma}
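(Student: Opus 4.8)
The plan is to prove the inequality by induction on $n$, after first rewriting the right-hand side in a more transparent closed form. Since $1 - \frac{n-1}{N} = \frac{N-n+1}{N}$, setting
\begin{align*}
R(n) \triangleq \frac{n}{(N-n)^2}\Big(1 - \frac{n-1}{N}\Big) = \frac{n(N-n+1)}{N(N-n)^2},
\end{align*}
the claim becomes $\sum_{t=1}^{n}(N-t)^{-2}\leqslant R(n)$ for every $1\leqslant n\leqslant N-1$ (we need $N-n\geqslant 1$ so the denominators are well defined). For the base case $n=1$ I would simply note that both sides equal $(N-1)^{-2}$, so equality holds; this already shows the estimate is tight and warns that the crude bound $\sum_{t=1}^{n}(N-t)^{-2}\leqslant n(N-n)^{-2}$ (and even the sharper telescoping $(N-t)^{-2}\leqslant (N-t-\tfrac12)^{-1}-(N-t+\tfrac12)^{-1}$) will overshoot and cannot be used directly.

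For the inductive step I would assume $\sum_{t=1}^{n}(N-t)^{-2}\leqslant R(n)$ and add the single new term indexed by $t=n+1$, which has value $(N-n-1)^{-2}$. This gives $\sum_{t=1}^{n+1}(N-t)^{-2}\leqslant R(n)+(N-n-1)^{-2}$, so it suffices to establish the purely algebraic increment bound
\begin{align*}
R(n+1) - R(n) \geqslant \frac{1}{(N-n-1)^2}.
\end{align*}
Plugging in the closed forms $R(n)=\frac{n(N-n+1)}{N(N-n)^2}$ and $R(n+1)=\frac{(n+1)(N-n)}{N(N-n-1)^2}$ and clearing the common factor $1/N$, the inequality to verify is $\frac{(n+1)(N-n)-N}{(N-n-1)^2}\geqslant \frac{n(N-n+1)}{(N-n)^2}$.

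The key simplification, which is the heart of the argument, is the identity $(n+1)(N-n)-N = n(N-n-1)$: this cancels one power of $(N-n-1)$ from the first fraction and, after dividing by $n$, reduces the whole statement to $\frac{1}{N-n-1}\geqslant \frac{N-n+1}{(N-n)^2}$, i.e. $(N-n)^2\geqslant (N-n+1)(N-n-1)=(N-n)^2-1$, which is trivially true with slack exactly $1$. Thus the induction closes cleanly for all $n\leqslant N-1$. The main obstacle is conceptual rather than computational: because the bound is attained with equality at $n=1$, any loosening estimate fails, so one must iterate the exact closed form $R(n)$ rather than a simpler majorant; once the correct quantity to induct on is identified, the increment inequality telescopes to the trivial $(N-n)^2\geqslant (N-n)^2-1$, and the only care needed is to keep $N-n-1>0$ throughout the step.
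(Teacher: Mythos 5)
Your proof is correct. Note, however, that the paper does not actually prove this lemma at all: it is stated with a citation to \cite{bardenet2015} (the scalar Hoeffding--Serfling paper) and the burden is entirely outsourced to that reference. So what you have produced is a self-contained, elementary replacement for an external citation rather than an alternative to an in-paper argument. Your route --- rewriting the right-hand side as the exact closed form $R(n) = \frac{n(N-n+1)}{N(N-n)^2}$ and inducting on $n$ --- checks out at every step: the base case $n=1$ holds with equality, the increment inequality $R(n+1)-R(n)\geqslant (N-n-1)^{-2}$ reduces via the identity $(n+1)(N-n)-N = n(N-n-1)$ to $(N-n)^2 \geqslant (N-n+1)(N-n-1)$, which is trivially true, and you correctly flag that the statement only makes sense for $n \leqslant N-1$ (the paper's ``$n \leqslant N$'' is sloppy, since both sides are undefined at $n=N$). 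Your observation that equality at $n=1$ rules out any crude majorization (such as bounding every term by $(N-n)^{-2}$, or the telescoping estimate) is also apt and explains why the exact form of $R(n)$ must be carried through the induction. What your approach buys is that the paper's matrix Hoeffding--Serfling proof becomes fully self-contained; what the citation buys is brevity and attribution, since the inequality is genuinely due to the earlier scalar analysis.
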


\subsection{Proof of  \Cref{Matrix_Hoeffding_Serfling_Inequality}}

First, it suffices to show the theorem only for symmetric matrices, due to the technique of {\em dilations} in \cite{Tropp2012} that transforms the asymmetric matrix to a symmetric matrix while keeping the spectral norm to be the same.

Second, it also suffices to show that for $1 \leqslant i \leqslant N$, $\X_i$ are symmetric and bounded, i.e., $a\I \preccurlyeq  \X_i \preccurlyeq b\I$, and $1 \leqslant n \leqslant N-1$, the following inequality holds
		\begin{align*}
		P\bigg(\lambda_{\max}\bigg(\frac{1}{n}\sum_{i=1}^{n} \X_i - \mu\bigg) \geqslant \epsilon \bigg) \leqslant d \exp \bigg( -  \frac{n \epsilon^2}{2(b-a)^2  (1+1/n) (1- n/N)}\bigg) . 
		\end{align*}
This is because the above result, with  $\X_i$ being replaced with $- \X_i$, implies
\begin{align}\label{HS_symmetric_smallest}
	P\bigg( \lambda_{\min}\bigg(\frac{1}{n}\sum_{i=1}^{n}   \X_i - \mu\bigg) \leqslant - \epsilon \bigg) \leqslant d \exp \bigg( -  \frac{n \epsilon^2}{2(b-a)^2  (1+1/n) (1- n/N)}\bigg).
	\end{align}
Then the combination of the two results completes the desired theorem.

We start the proof by applying the matrix version of Chernoff inequality as follows. Let $\Z_k \triangleq \frac{1}{k} \sum_{i = 1}^{k} \X_i$, for any $\lambda >0$, we obtain
	\begin{align*} \label{HS_almost_done}
			P\bigg(\lambda_{\max}( \Z_n  ) \geqslant \epsilon \bigg) & = P\bigg(\exp (\lambda  \lambda_{\max}( \Z_n )) \geqslant \exp(\lambda \epsilon)
			 \bigg) \\
			 & \numleqslant{i} \exp(- \lambda \epsilon) \Ebb \exp\big(\lambda  \lambda_{\max}( \Z_n )\big) \\
			 & \numleqslant{ii} \exp(- \lambda \epsilon) \Ebb \ \lambda_{\max} \big(\exp(\lambda  \Z_n )\big) \\
			 &\numleqslant{iii} \exp(- \lambda \epsilon) \Ebb \  \tre (\lambda   \Z_n ) \\
			 & \numleqslant{iv} \exp(- \lambda \epsilon) \ \tre \bigg( \frac{\lambda^2}{2} (b-a)^2 \frac{(n+1)}{n^2}\bigg( 1 - \frac{n}{N} \bigg) I  \bigg) \\
			 & \numleqslant{v} d \exp \bigg(\frac{\lambda^2}{2} (b-a)^2 \frac{(n+1)}{n^2} \bigg( 1 - \frac{n}{N} \bigg)\bigg) \exp(- \lambda \epsilon) \\
			 & = d \exp \bigg(\frac{\lambda^2}{2} (b-a)^2 \frac{(n+1)}{n^2} \bigg( 1 - \frac{n}{N} \bigg) - \lambda \epsilon \bigg) \numberthis
	\end{align*}
where (i) follows from the matrix version of Chernoff inequality, (ii) follows from  the fact that  $\exp(\cdot)$ is an increasing function, thus $\exp\big(\lambda  \lambda_{\max}( \Z_n ) \big)= \lambda_{\max} \big(\exp(\lambda  \Z_n )$, and (iii) follows  from the fact that $\lambda_{\max}(\A)\leqslant \mathrm{ tr}(\A)$, with  $\A = \exp(\lambda \Z_n)$, we get the desire result.

We next bound $\mathbb{E} \  \mathrm{tr} \ \exp (\lambda \Z_{n})$. Let $Y_k \triangleq Z_{N-k}$ for $1 \leqslant k \leqslant N-1$, and $\mathbb{E}_k [\ \cdot \ ] \triangleq \mathbb{E} [\ \cdot \ | \Y_1, \cdots, \Y_k]$. Thus, 
	\begin{align*}  \label{iteration_main_function}
	  \mathbb{E} \  \mathrm{tr} \ \exp (\lambda \Y_n) &\numequ{i} \mathbb{E} \  \mathrm{tr} \ \exp \bigg( \lambda \Y_{n-1} -  \lambda \frac{\X_{N-n+1} - \mu - \Y_{n-1}}{N-n} \bigg) \\
	  & \numequ{ii} \mathbb{E} \  \Ebb_{n-1} \  \mathrm{tr}  \ \exp \bigg( \lambda \Y_{n-1} -  \lambda \frac{\X_{N-n+1} - \mu - \Y_{n-1}}{N-n} \bigg) \\
	  & \numleqslant{iii}  \mathbb{E}   \  \mathrm{tr} \   \exp \bigg( \lambda \Y_{n-1} + \log \Ebb_{n-1} \exp \bigg( -  \lambda \frac{\X_{N-n+1} - \mu - \Y_{n-1}}{N-n} \bigg) \bigg), \numberthis
	\end{align*}
where (i) follows from \Cref{HS_lemma_1_2}, (ii) follows from the tower property of expectation, (iii) follows by applying \Cref{HS_lemma_1_5}, where $ \lambda \Y_{n-1}$ is deterministic given ${\Y_1, \cdots, \Y_k}$, and $ -  \lambda  \big(\X_{N-n+1} - \mu - \Y_{n-1} \big)/{(N-n)}$ is a random variable matrix.

In order to apply \Cref{HS_lemma_1_4} to bound  $ \Ebb_{n-1} \exp ( -  \lambda  \big(\X_{N-n+1} - \mu - \Y_{n-1} \big)/{(N-n)})$, we first bound $ \X_{N-n+1} - \mu - \Y_{n-1}$ as follows:
\begin{align*} \label{HS_hoffeding_lemma_variance_bound_0}
	\X_{N-n+1} - \mu - \Y_{n-1} &\numequ{i} \X_{N-n+1} - \mu - \Z_{N - n+1} \\
	&\numequ{ii} \X_{N-n+1} - \mu -\frac{1}{N-n+1}\sum_{i = 1}^{N-n+1}\bigg( \X_i - \mu \bigg)\\
	&=  \X_{N-n+1}  -\frac{1}{N-n+1}\sum_{i = 1}^{N-n+1}  \X_i,  \numberthis
\end{align*}
where (i) follows from the definition of $\Y_{n-1}$ and (ii) follows from the definition of $\Z_{N - n +1}$. Since $a\I \preccurlyeq  \X_i \preccurlyeq b\I$, the above equality implies 
 
\begin{align} \label{HS_hoffeding_lemma_variance_bound_3}
-\frac{(b-a)}{N-n} \I	\preccurlyeq \frac{	\X_{N-n+1} - \mu - \Y_{n-1}}{N - n}  \preccurlyeq \frac{(b-a)}{N-n} \I.
\end{align} 
By applying \Cref{HS_lemma_1_4}, and the fact $\mathbb{E}_{n-1} [\X_{N-n+1} - \mu - \Y_{n-1}] = 0$ due to  \Cref{HS_lemma_1_3}, we obtain
\begin{align*} \label{iteration_constant}
	\mathbb{E}_{n-1} \exp \bigg( \X_{N-n+1} - \mu - \Y_{n-1} \bigg) &\preccurlyeq \exp \bigg(\frac{1}{8} \lambda^2 \bigg(\frac{2(b-a)}{N-n} \bigg) ^2 \I  \bigg)  
	= \exp \bigg(\frac{1}{2} \lambda^2 \bigg(\frac{  b-a  }{N-n} \bigg) ^2 \I  \bigg),  \numberthis
\end{align*}

Substituting \cref{iteration_constant} into \cref{iteration_main_function}, we obtain
\begin{align*} \label{HS_iteration}
	 \mathbb{E} \  \mathrm{tr} \ \exp (\lambda \Y_n) &  \numleqslant{i}  \mathbb{E}   \  \mathrm{tr} \   \exp \bigg( \lambda \Y_{n-1} + \log\exp \bigg(\frac{1}{2} \lambda^2 \bigg(\frac{  b-a  }{N-n} \bigg) ^2 \I  \bigg)  \bigg) \\
	 &  = \mathbb{E}   \  \mathrm{tr} \   \exp \bigg( \lambda \Y_{n-1} +\frac{\lambda^2}{2}  \bigg(\frac{  b-a  }{N-n} \bigg) ^2 \I  \bigg) \\
&\qquad \cdots \cdots	\\
&  \numleqslant{ii} \tre \bigg(\log \Ebb [e^{\lambda \Y_1}]  + \sum_{t=2}^{n} \frac{\lambda^2}{2} \bigg(\frac{  b-a  }{N-t}  \bigg)^2 \I \bigg).
 \numberthis
\end{align*}
where (i) follows from \cref{trace_exp_monotone,log_monotone}, and (ii) follows by applying the steps similar to obtain \cref{iteration_constant} for $n-2$ times.
 
To bound $\Ebb [e^{\lambda \Y_1}]$, we first note that 
\begin{align*}
  \Y_1  = \Z_{N-1}  &=  \frac{1}{N-1} \sum_{i=1}^{N-1}\bigg( \X_i - \mu\bigg) 
  \numequ{i}  \frac{1}{N-1} \bigg( N\mu - \X_N - (N-1) \mu \bigg)  
  = \frac{1}{N-1} \bigg( \mu - \X_N  \bigg) \label{HS_bound_Y1} ,
\end{align*}
where (i) follows because $N \mu = \sum_{i=1}^{N} \X_i$. 
Thus with  $a\I \preccurlyeq \X_i \preccurlyeq b\I$ and $a\I \preccurlyeq \mu \preccurlyeq b\I$, we obtain
\begin{align}
 - \frac{(b-a)}{N-1}\I  \preccurlyeq  \Y_1  \preccurlyeq \frac{(b-a)}{N-1}\I.
\end{align}
Applying the matrix Hoeffding lemma with \cref{HS_bound_Y1} and $\Ebb [Y_1] = \Ebb[Z_{N-1}] = 0$, we obtain
\begin{align} \label{HS_bound_Y1_final}
    \Ebb [e^{\lambda \Y_1}] \preccurlyeq  \exp \bigg(\frac{1}{2} \lambda^2 \bigg(\frac{b-1}{N-1}\bigg) ^2 \I  \bigg).
\end{align}

Substituting \cref{HS_bound_Y1_final} into \cref{HS_iteration}, we obtain
\begin{align*} \label{tre_lambda_y}
	 \mathbb{E} \  \mathrm{tr} \ \exp (\lambda \Y_n)  &\leqslant \tre \bigg(\sum_{t=1}^{n} \frac{\lambda^2}{2} \bigg(\frac{  b-a  }{N-t}  \bigg)^2 \I \bigg) \\
	 & = \tre \bigg( \frac{\lambda^2}{2} (b-a)^2 \sum_{t=1}^{n}  \bigg(\frac{ 1 }{N-t}  \bigg)^2 \I  \bigg) \\
	 & \numleqslant{i} \tre \bigg( \frac{\lambda^2}{2} (b-a)^2  \frac{n}{(N-n)^2} \bigg(1 - \frac{n-1}{N} \bigg) \I  \bigg), \numberthis  
\end{align*}
where (i) follows from \cref{HS_square_sum_inequality}.

Now let $m = N-n $, where  $1 \leqslant m \leqslant N-1$, and hence $\Y_n = \Z_{N-n}$. Thus, \cref{tre_lambda_y} implies
\begin{align*}
	 \mathbb{E} \  \mathrm{tr} \ \exp (\lambda \Z_{m})   
	 & \leqslant   \tre \bigg( \frac{\lambda^2}{2} (b-a)^2 \frac{(m+1)}{m^2}\bigg( 1 - \frac{m}{N} \bigg) \I  \bigg).
\end{align*}
Substituting the above bound into \cref{HS_almost_done}, we obtain
\begin{align*} 			
P\bigg(\lambda_{\max}( \Z_n  ) \geqslant \epsilon \bigg) 
			& \leqslant \exp(- \lambda \epsilon) \ \tre \bigg( \frac{\lambda^2}{2} (b-a)^2 \frac{(n+1)}{n^2}\bigg( 1 - \frac{n}{N} \bigg) \I  \bigg) \\
			 & = d \exp \bigg(\frac{\lambda^2}{2} (b-a)^2 \frac{(n+1)}{n^2} \bigg( 1 - \frac{n}{N} \bigg) - \lambda \epsilon \bigg),\ \numberthis
	\end{align*}
where the last step follows form the equation $\mathrm{ tr} (a\I) =  da$ for $\I \in \mathbb{R}^{d\times d}$. The proof is completed by minimizing the above bound with respect to  $\lambda > 0$, and then substituting the minimizer $ \lambda^\star = \frac{n\epsilon}{(b-a)^2(1+ \frac{1}{n}) ( 1 - \frac{n}{N}) }$.

\section{Proofs for SVRC under Sampling without Replacement}
\subsection{Proof of \Cref{SVRC_Without_Replacement}}
\begin{proof}
	 The idea of the proof is to apply the matrix concentration inequality for sampling without replacement that we developed in \Cref{Matrix_Hoeffding_Serfling_Inequality} to characterize the sample complexity in order to satisfy the inexactness condition $\norml{  \Hb_k - \nabla^2 F(\x_k) } \leqslant  \alpha \max \{\norml{\s_k}, \epsilon_1\} $ with the probability at least $1 - \zeta$. 
	 
	 We first note that
	  \begin{align*} 
	 \Hb_k - \nabla^2 F(\x_k) &\numequ{i} \tfrac{1}{|\xi_{H}(k)|}  \big[\! \textstyle\sum_{i \in \xi_{H}(k)} (\nabla^2 f_i(\x_k)  \!- \! \nabla^2 f_i(\tilde{\x})) \!\big]  \!+\!\nabla^2 F(\tilde{ \x}_k) - \nabla^2 F(\x_k) \\
	 &= \frac{1}{|\xi_{H}(k)|} \sum_{i \in\xi_{H}(k)}   \left(   \nabla^2 f_i(\x_k) -  \nabla^2 f_i(\tilde{\x})  +  \nabla^2 F(\tilde{\x}) - \nabla^2 F( \x_{k})   \right)
	 \end{align*}
	 where (i) follows from the definition of $\Hb_k$ in \Cref{SVRC}. In order to apply the concentration inequality (\Cref{Matrix_Hoeffding_Serfling_Inequality}) to bound $\Hb_k - \nabla^2 F(\x_k) $, we  define, for $1 \leqslant i \leqslant N$,  
	 \begin{align*}
	 \X_i = \nabla^2 f_i(\x_k) -  \nabla^2 f_i(\tilde{\x})  +  \nabla^2 F(\tilde{\x}) - \nabla^2 F( \x_{k}),
	 \end{align*}
	 which gives
	 \begin{align} \label{Whithou_replacement_hessian_samples_0}
	 	\Hb_k - \nabla^2 F(\x_k)  = \frac{1}{|\xi_{H}(k)|} \sum_{i \in\xi_{H}(k)} \X_i. 
	 \end{align}
	 
	 Moreover, we have $\mu \triangleq \frac{1}{N} \sum_{i =1}^{N}  \X_i = \mathbf{0}$, and
	 \begin{align*}
	 \sigma \triangleq \norml{\A_i} &= \norml{\nabla^2 f_i(\x_k) -  \nabla^2 f_i(\tilde{\x})  +  \nabla^2 F(\tilde{\x}) - \nabla^2 F( \x_{k})} \numleqslant{i} 2L_2 \norml{\x_{k} - \tilde{\x}},
	 \end{align*}
	 where (i) follows because $\nabla^2 f_i(\cdot)$ is $L_2$ Lipschitz, for $1 \leqslant i \leqslant N$.
    
     Thus, in order to satisfy  $\norml{  \Hb_k - \nabla^2 F(\x_k) } \leqslant  \alpha \max \{\norml{\s_k}, \epsilon_1\} $ with probability at least $1 - \zeta$, by \cref{Whithou_replacement_hessian_samples_0}, it is equivalent to satisfy $\norml{   \frac{1}{|\xi_{H}(k)|} \sum_{i \in\xi_{H}(k)} \X_i - \mu  } \leqslant \leqslant  \alpha \max \{\norml{\s_k}, \epsilon_1\}$ with probability at least $1 - \zeta$. We now apply \Cref{Matrix_Hoeffding_Serfling_Inequality} for $\X_i$,  and it is  sufficient to have:
	 \begin{align*}
	 2(d_1 + d_2) \exp \bigg( -  \frac{n \epsilon^2}{8 \sigma^2 (1+1/n) (1- n/N)}\bigg) \leqslant \zeta,
	 \end{align*}
	 which implies
	 \begin{align*}
	 \frac{n \epsilon^2}{8 \sigma^2 (1+1/n) (1- n/N)} \geqslant \log(\frac{2(d_1 + d_2)}{\zeta}).
	 \end{align*}
	 Using $(1+1/n) \leqslant 2$, it is sufficient to have:
	 \begin{align*}
	 \frac{n \epsilon^2}{16 \sigma^2   (1- n/N)} \geqslant \log(\frac{2(d_1 + d_2)}{\zeta}),
	 \end{align*}
	 which implies
	 \begin{align}
	 n \geqslant \frac{1}{\frac{1}{N} + \frac{\epsilon^2}{16 \sigma^2 \log(2(d_1+d_2)/\zeta)}}.
	 \end{align}
	 We then substitute $\sigma = 2L_2 \norml{\x_{k} - \tilde{\x}}$,   $\epsilon = \alpha \max \{\norml{\s_k}, \epsilon_1\}$, and $n = |\xi_{H}(k)|$, and obtain the required sample size to satisfy
	 \begin{align}
	 |\xi_{H}(k)| \geqslant \frac{1}{\frac{1}{N} + \frac{\alpha^2 \max \{\norml{\s_k}^2, \epsilon_1^2\}}{64 L_2^2 \norml{\x_{k} - \tilde{\x}}^2 \log(4d/\zeta)}}.
	 \end{align}
	 
	  We next bound the sample size $|\xi_{g}(k)|$ for the gradient, the proof follows the same procedure. We first define $\X_i \in \mathbb{R}^{d  \times 1}$ as
	 \begin{align} \label{Whithou_replacement_hessian_samples_1}
	 \X_i = \nabla f_i(\x_k) -  \nabla f_i(\tilde{\x})  +  \nabla F(\tilde{\x}) - \nabla F( \x_{k}),
	 \end{align}
	 and hence 
	 \begin{align}  \label{Whithou_replacement_hessian_samples_3}
	 	\g_k - \nabla  F(\x_k)  = \frac{1}{|\xi_{g}(k)|} \sum_{i \in\xi_{g}(k)} \X_i. 
	 \end{align}
	 Moreover, we have  $\mu = \frac{1}{N} \sum_{i \in \xi_{g}(k)} \A_i = \mathbf{0}$, and
	 \begin{align*}
	 \sigma \triangleq \norml{\A_i} &= \norml{\nabla  f_i(\x_k) -  \nabla  f_i(\tilde{\x})  +  \nabla  F(\tilde{\x}) - \nabla  F( \x_{k})} \numleqslant{i} 2L_1 \norml{\x_{k} - \tilde{\x}},
	 \end{align*}
	 where (i) follows because $\nabla  f_i(\cdot)$ is $L_1$ Lipschitz, for $1 \leqslant i \leqslant N$.  
	 
	 In order to satisfy $\norml{   \g_k - \nabla  F(\x_k)   } \leqslant \beta \max \{\norml{  \s_{k}}^2, \epsilon_1^2\}$ with probability at least $1 - \zeta$, by \cref{Whithou_replacement_hessian_samples_3}, it is equivalent to satisfy $\norml{    \frac{1}{|\xi_{g}(k)|} \sum_{i \in \xi_{g}(k)} \X_i - \mu   } \leqslant \beta \max \{\norml{  \s_{k}}^2, \epsilon_1^2\}$ with probability at least $1 - \zeta$. We then apply \Cref{Matrix_Hoeffding_Serfling_Inequality} for $\X_i$ in the way similar to that for bounding the sample size for Hessian, with $\sigma = 2L_1 \norml{\x_{k} - \tilde{\x}}$, $\mu = 0$, $\epsilon =\beta \max \{\norml{  \s_{k}}^2, \epsilon_1^2\}$, and $n = |\xi_{g}(k)|$, and obtain the required sample size to satisfy
	 \begin{align}
	 |\xi_{g}(k)| \geqslant \frac{1}{\frac{1}{N} + \frac{\beta^2 \max \{\norml{  \s_{k}}^4, \epsilon_1^4\}}{64 L_1^2 \norml{\x_{k} - \tilde{\x}}^2 \log(2(d+1)/\zeta)}}.
	 \end{align}
\end{proof}

\subsection{Proof of \Cref{SCR_Without_Replacement}}
\begin{proof}
	 The proof of \Cref{SCR_Without_Replacement} is similar to the proof of \Cref{SVRC_Without_Replacement}. We first define $\A_i \in \mathbb{R}^{d  \times d}$ as
	 \begin{align} \label{Whithou_replacement_SCR_hessian_samples_1}
	 \A_i = \nabla^2 f_i(\x_k) - \nabla^2 F( \x_{k}),
	 \end{align}
	 and hence $\mu = \frac{1}{N} \sum_{i \in \xi_{H}(k)} \A_i = \mathbf{0}$. Furthermore,
	 \begin{align*}
	 \sigma \triangleq \norml{\A_i} &= \norml{ \nabla^2 f_i(\x_k) - \nabla^2 F( \x_{k})} \numleqslant{i} 2L_1 ,
	 \end{align*}
	 where (i) follows from \Cref{assum: obj}.
	 
	 Let  $  \{\X_i\}_{i = 1}^{| \xi_{g}(k)|}= \{\A_i : i \in \xi_{H}(k) \}$, and we have
	 \begin{align} \label{Whithou_replacement_SCR_hessian_samples_3}
	 \frac{1}{|\xi_{H}(k)|} \sum_{i \in \xi_{H}(k)} \X_i - \mu \numequ{i} \frac{1}{|\xi_{H}(k)|} \sum_{i \in \xi_{H}(k)} \A_i \numequ{ii} \Hb_k - \nabla^2  F(\x_k),
	 \end{align}
	 where (i) follows from the fact that $\mu = 0$ and (ii) follows from the definition of $\Hb_k$ in \Cref{SVRC}. 
	 
    We then apply \Cref{Matrix_Hoeffding_Serfling_Inequality} for $\X_i$ with $\sigma = 2L_1 $, $\mu = 0$, $\epsilon =C_2 \norml{\x_{k+1} - \x_{k}}$, and $n = |\xi_{H}(k)|$, and obtain the require sampled size to satisfy
	 \begin{align}
	 |\xi_{H}(k)| \geqslant \frac{1}{\frac{1}{N} + \frac{C_2^2\norml{\x_{k+1} - \x_{k}}^2}{64 L_1^2   \log(4d/\zeta)}}.
	 \end{align}
	 
	 To bound the sample size of gradient, i.e., $|\xi_{g}(k)|$,  we follow the similar proof by constructing
	 \begin{align}
	 	 \A_i = \nabla f_i(\x_k) - \nabla F( \x_{k}),
	 \end{align}
	 
	  and applying  \Cref{Matrix_Hoeffding_Serfling_Inequality} with $\sigma = 2L_0$, $\mu = 0$, $\epsilon =C_1 \norml{\x_{k+1} - \x_{k}}^2$, and $n = |\xi_{g}(k)|$, and obtain the required sample size to satisfy
	  	 \begin{align}
	  |\xi_{g}(k)| \geqslant \frac{1}{\frac{1}{N} + \frac{C_1^2\norml{\x_{k+1} - \x_{k}}^4}{64 L_0^2   \log(2(d+1)/\zeta)}}.
	  \end{align}
	  
\end{proof}
\subsection{Proof of \Cref{th:no_rep_totalsample}} \label{Proof_theorem6}
\begin{proof}
	Assume the algorithm terminates at iteration $k$, then the total Hessian complexity is given by 
	\begin{align*}
	m + \frac{kN}{m}  +  \sum_{i=0}^{k/m-1}  \sum_{j=1}^{m-1} |\xi_{H}(k) |  	&\numleqslant{i}   \frac{CkN}{m} +   \sum_{i=0}^{k/m-1} \sum_{j=1}^{m-1}  \frac{1}{\frac{1}{N} + \frac{\alpha^2 \max \{\norml{\s_k}^2, \epsilon_1^2\}}{64 L_2^2 \norml{\x_{i\cdot m + j} -  \x_{i\cdot m } }^2 \log(4d/\zeta)}} \\
	& \leqslant  	\frac{CkN}{m} +   \sum_{i=0}^{k/m-1} \sum_{j=1}^{m-1}  \frac{64 L_2^2 \norml{\x_{i\cdot m + j} -  \x_{i\cdot m  } }^2 \log(4d/\zeta)}{\alpha^2 \epsilon_1^2 }  \\
	&\numleqslant{ii} \frac{CkN}{m} +      \frac{64L_2^2}{\alpha^2 \epsilon_1^2}  \left(m^2k^{1/3}C^{2/3} \right)\log \left(\frac{ 4d}{\zeta}\right)   \\
	&\numleqslant{iii} C \log   \left(\frac{ 4d}{\zeta}\right)  \left( \frac{ N}{m\epsilon^{3/2}} +      \frac{  m^2  }{    \epsilon^{3/2} }    \right) = \frac{C}{\epsilon^{3/2}} \log \left(\frac{ 4d}{\zeta}\right)  \left(  \frac{ N}{m } +   m^2     \right)
	\end{align*}
	where (i) follows form  \Cref{SVRC_Without_Replacement}, and (ii) follows form \cref{bound_for_sum_1},
	(iii) follows from the fact that   $\zeta < 1$ and $d \geqslant 1$ which gives $\log \left(\frac{ 4d}{\zeta}\right) > 1$,  and  the fact that $\epsilon_1 = O(\epsilon^{1/2})$ such that $ k = O(\epsilon^{-3/2})$ according to \Cref{convergence_thm}.
	
	We minimize the above bound over $m$, substitute the minimizer $m^\star = N^{1/3}$, and follows the similar procedure in the proof of \cref{vector_norm_bound_7} to ensure  a successful event overall iteration with at least $1- \delta$, which gives that 
	\begin{align}
	\sum_{i=0}^{k} |\xi_{H}(k)|\leqslant   \frac{ C N^{2/3} }{\epsilon^{ 3/2}}\log \left(\frac{ 8d}{  \epsilon \delta}\right).
	\end{align}
	Thus, we have
	\begin{align}
	\sum_{i=0}^{k} |\xi_{H}(k)| = \tilde{O} \left(\frac{N^{3/2}}{\epsilon^{3/2}} \right).
	\end{align}
\end{proof}

\end{document}